\numberwithin{equation}{section}
\newtheorem{theorem}{Theorem}
\newtheorem{lemma}[theorem]{Lemma}
\newtheorem{proposition}[theorem]{Proposition}
\newtheorem{corollary}[theorem]{Corollary}
\theoremstyle{remark}
\begin{document}

\title[fixed trace $\beta$-Hermite ensembles]{Fixed trace $\beta$-Hermite ensembles: Asymptotic eigenvalue density and the
edge of the density }
\date{May, 2009\\
\ \ \ \ E-mail addresses: zhdasheng@gmail.com (D.-S. Zhou),
dzliumath@gmail.com \\ (D.-Z. Liu), fsttq@umac.mo (T. Qian).}
\maketitle

 \centerline{Da-Sheng
Zhou${}^{a}$, Dang-Zheng Liu${}^b$, Tao Qian${}^a$}
\bigskip
\centerline{{\it ${}^a$Department of Mathematics, University of
Macau, Av. Padre Tom\'{a}s Pereira,}}

\centerline{{\it Taipa, Macau,P.R. China}}

\centerline{{\it ${}^b$School of Mathematical Sciences, Peking
University, Beijing, 100871, P.R. China}}

\maketitle

\begin{abstract}
In the present paper, fixed trace $\beta$-Hermite ensembles
generalizing the fixed trace Gaussian Hermite ensemble are
considered. For all $\beta$, we prove the Wigner semicircle law for
these ensembles by using two different methods: one is the moment
equivalence method with the help of the matrix model for general
$\beta$, the other is to use asymptotic analysis tools. At the edge
of the density, we prove that the edge scaling limit for $\beta$-HE
implies the same limit for fixed trace  $\beta$-Hermite ensembles.
Consequently, explicit limit can be given for fixed trace GOE, GUE
and GSE. Furthermore, for even $\beta$, analogous to $\beta$-Hermite
ensembles, a multiple integral of the Konstevich type can be
obtained.

\end{abstract}
\noindent \textbf{Keywords}: Fixed trace ensembles; Asymptotic
analysis;
Semicircle law; Konstevich-type integral.\\
\noindent \textbf{2000 MSC}: 15A52, 41A60.\\

\section{Introduction and main results}

$\beta$-Hermite ensembles ($\beta$-HE) \cite{DE1,DF} generalize the
classical random matrix ensembles: Gaussian orthogonal, unitary and
symplectic ensembles (denoted by GOE, GUE and GSE for short, which
correspond to the Dyson index $\beta=1, 2$ and 4) from the
quantization index to the continuous exponents $\beta>0$. These
ensembles possess the joint probability density function (p.d.f.) of
real eigenvalues $x_{1},\ldots, x_{N}$ with the form
\begin{equation}
P_{\beta HE_{N}}(\textbf{x})=\frac{1}{Z_{\beta HE_{N}}}\prod_{1\leq
j<k\leq N}|x_{j}-x_{k}|^{\beta}\prod_{i=1}^{N}e^{-x_{i}^{2}/2}
\end{equation}where the normalization constant
$Z_{\beta HE_{N}}$  can be calculated with the help of the Selberg
integrals \cite{M}:
\begin{equation}
Z_{\beta
HE_{N}}=(2\pi)^{\frac{N}{2}}\prod_{j=1}^{N}\frac{\Gamma(1+\frac{\beta
j}{2})}{\Gamma(1+\frac{\beta}{2})}.
\end{equation}
Recently, Dumitriu and Eldeman \cite{DE1} have constructed a
tri-diagonal real symmetric matrices of the form
\begin{equation}
\label{matrixmodel} H_\beta \sim\frac{1}{\sqrt{2}}\begin{pmatrix}
  \mathrm{N}[0,2] & {\chi }_{(N-1)\beta}&  & &   \\
{\chi }_{(N-1)\beta} & \mathrm{N}[0,2]&{\chi }_{(N-2)\beta}& &  &    \\
  & {\chi }_{(N-2
  )\beta} & \mathrm{N}[0,2] & {\chi }_{(N-3)\beta} &   \\
 &\;\ddots &\;\ddots & \;\ddots  & &  \\
    & &{\chi }_{2\beta} &  \mathrm{N}[0,2] & {\chi }_{\beta} \\
 &  && {\chi }_{\beta} &\mathrm{N}[0,2]
\end{pmatrix}\end{equation}
where the $N$ diagonal and $N-1$ subdiagonal elements are mutually
independent, with standard normals on the diagonal, and $1/\sqrt{2}
\,\chi_{k\beta}$ on the subdiagonal. It is worth mentioning that the
p.d.f. of $1/\sqrt{2}\, \chi_{k\beta}$ is given by
$$\frac{2}{\Gamma{(k\beta/2)}}x^{k\beta-1}e^{-x^{2}}.$$
Furthermore, they proved that  the eigenvalues j.d.f. of $H_\beta $
was given by (1.1).

Basing on the p.d.f. of eigenvalues in Eq.\,(1.1), the (level)
density, or one-dimensional marginal eigenvalue density is defined
as follows:
\begin{align}
\label{density} \rho_{\beta HE_{N}}(x_{1})=\int_{\mathbb{R}^{N-1}}
\,P_{\beta HE_{N}}(\textbf{x})\,dx_{2}\cdots dx_{N}.\end{align} One
knows
 \cite{D,J} that the asymptotic eigenvalue density as $N
 \rightarrow\infty$ (density of states):
\begin{equation}
\label{semicirclelaw} \lim_{N\rightarrow\infty}\sqrt{2\beta
N}\large{\rho}_{\beta HE_{N}}(\sqrt{2\beta
N}x)=\rho_{\mathrm{W}}(x):=\begin{cases}
    \displaystyle\frac{2}{\pi}\sqrt{1-x^2}&  -1< x< 1, \\
    \displaystyle 0&
    |x|\geq 1.
    \end{cases}
\end{equation}
This result is  referred to as the Wigner semicircle law. In terms
of statistical physics, for any finite size $N$, we expect that most
of the eigenvalues concentrate in the interval
$(-\sqrt{2N},\sqrt{2N})$, referred to the ``bulk region" of
mechanical problem, while the scaled density decreases rapidly in
the vicinity of the spectrum edge $\approx\pm\sqrt{2N}$, referred to
the ``soft edge". At the edge of the spectrum, for Gaussian
orthogonal, unitary and symplectic ensembles, or $\beta=1, 2$ and 4,
a classical result \cite{F1,G1,G2} claims that the edge scaling
limit could be expressed in terms of Airy function. Explicitly, it
says: for $\beta=1,2,4$,
\begin{equation}
\label{edge124}
\lim_{N\rightarrow\infty}\frac{\sqrt{\beta}N^{5/6}}{\sqrt{2}}\rho_{\beta
HE_{N}}\left(\sqrt{2\beta
N}(1+\frac{x}{2N^{2/3}})\right)=\mathbf{Ai}_{\beta}(x)
\end{equation}
where
\begin{equation}
\label{need1} \mathbf{Ai}_{\beta}(x)=\begin{cases}
    \displaystyle (\textrm{Ai}^{\prime}(x))^2-x(\textrm{Ai}(x))^{2}+\frac{1}{2}\textrm{Ai}(x)\left(1-\int_{x}^{\infty}\textrm{Ai}(t)dt \right)  &  \beta=1, \\
    \displaystyle (\textrm{Ai}^{\prime}(x))^2-x(\textrm{Ai}(x))^{2}& \beta=2,\\
    \displaystyle (\textrm{Ai}^{\prime}(2 x))^2-2 x(\textrm{Ai}(2 x))^{2}-\textrm{Ai}(2 x)\int_{x}^{\infty}\textrm{Ai}(2 t)dt &
    \beta=4.
  \end{cases}
\end{equation}
Here, the Airy function of a real variable $x$ can be defined as
\begin{equation}
\textrm{Ai}(x)=\frac{1}{2\pi
i}\int_{-i\infty}^{i\infty}e^{v^{3}/3-xv}dv
\end{equation}
satisfying the equation
\begin{equation}
\textrm{Ai}^{\prime\prime}(x)=x\textrm{Ai}(x).
\end{equation}

Much finer correction terms to the large $N$ asymptotic expansions
of the eigenvalue density are considered in \cite{G1,G2}, and (1.5)
is also proved for $\beta=1,2,4$. Recently, for every even $\beta$,
Desrosiers and Forrester\cite{DF}, by applying the steepest descent
method, obtained a multiple integral of the Konstevich type at the
edges which constitutes a $\beta$-deformation of the Airy function.
That is,  for even $\beta$,
\begin{multline}
\label{generaledge}
\lim_{N\rightarrow\infty}\frac{\sqrt{\beta}N^{5/6}}{\sqrt{2}}\rho_{\beta HE_{N}}\left(\sqrt{2\beta N}(1+\frac{x}{2N^{2/3}})\right)\\
 =\frac{1}{2\pi}\left(\frac{4\pi}{\beta}\right)^{\beta/2}\frac{\Gamma(1+\beta/2)}{\prod_{j=2}^\beta
 {\Gamma(1+2/\beta)}^{-1}\Gamma(1+2j/\beta)}\,K_{\beta,\beta}(x)\end{multline}
where  $K_{\beta,\beta}(x)$ is a multiple integral of Konstevich
type or multiple Airy integrals defined by \cite{K2}:
\begin{equation}\label{Kbeta}K_{n,\beta}(x):=-
\frac{1}{(2\pi\mathrm{i})^{n}}\int_{-\mathrm{i}\infty}^{\mathrm{i}\infty}dv_1\cdots
\int_{-\mathrm{i}\infty}^{\mathrm{i}\infty}dv_n\prod_{j=1}^n
e^{v_j^3/3-xv_j}\prod_{1\leq k<l\leq
n}|v_k-v_l|^{4/\beta}\,.\end{equation}

Remark: When $\beta=2,4$, the right sides of (\ref{edge124}) and
(\ref{generaledge}) coincide. However, to stress the cases
$\beta=1,2,4$ and general $\beta$, we state them respectively.

\vspace{2mm}


In the present paper, we deal with fixed trace $\beta$-Hermite
ensembles and extend the properties (\ref{semicirclelaw}),
(\ref{edge124}) and (\ref{generaledge}) to these fixed trace
ensembles. First, let us give a review \cite{Rosenzweig,M}.
Proceeding from the analogy of a fixed energy in classical
statistical mechanics, Rosenzweig defines \cite{Rosenzweig} his
``fixed trace'' ensemble for a Gaussian real symmetric, Hermitian or
self-dual  matrix $H$ by the requirement that the trace of $H^{2}$
be fixed to a number $r^{2}$ with no other constraint. The number
$r$ is called the strength of the ensemble. The joint probability
density function for the matrix elements of $H$ is therefore given
by

\[P_{r}(H)=K_{r}^{-1}\,
\delta\big(\frac{1}{r^{2}}\text{tr}\,H^{2}-1\big)\]
 where $K_{r}$ is the normalization constant. Note that this probability density
 function is invariant under the conjugate action by orthogonal, unitary
 or symplectic groups, because of the invariance of the quantity
 $\text{tr}\,H^{2}$. Now Rosenzweig's  fixed trace
 ensemble has been extended to other ensembles, one of which is
 fixed trace  $\beta$-HE.
 With the help of $H_\beta$ in Eq.  (\ref{matrixmodel}), G. LeCa\"{e}r and R. Delannay \cite{LD1} define the associated fixed
trace $\beta$-HE as the ensemble of matrices:
$$F_{\beta}=\sqrt{N(N-1)/2}\,H_{\beta}\,/\sqrt{\text{tr}H_{\beta}^{2}}$$
satisfying $\text{tr}\,F^{2}_{\beta}=N(N-1)/2$.
 Its eigenvalue joint p.d.f has the form
\begin{equation}
P_{\beta FTE_{N}}(x_{1},x_{2},\cdots,x_{N})=\frac{1}{Z_{\beta
FTE_{N}}}\,\delta\,
\big(\sum_{i=1}^{N}x_{i}^{2}-\frac{N(N-1)}{2}\big) \prod_{1\leq j<k
\leq N} |x_{j}-x_{k}|^{\beta}
\end{equation}
where the normalization constant $Z_{\beta FTE_{N}}$ can be computed
by virtue of  variable substitution for the partition function
$Z_{\beta HE_{N}}$:
\begin{equation}
Z_{\beta
FTE_{N}}=(\frac{N(N-1)}{2})^{\frac{N_{\beta}-1}{2}}\frac{(2\pi)^{\frac{N}{2}}2^{-\frac{N_{\beta}}{2}+1}}{\Gamma(\frac{N_{\beta}}{2})}\prod_{j=1}^{N}\frac{\Gamma(1+\frac{j\beta}{2})}{\Gamma(1+\frac{\beta}{2})}
\end{equation}
where $N_{\beta}=N+\beta N(N-1)/2$. It is worth emphasizing that we
have chosen the square of the strength $r^{2}=N(N-1)/2$ since the
expectation of $\text{tr}H_{\beta}^{2}$ is $N(N-1)/2+N/\beta \approx
N(N-1)/2 $ as $N \rightarrow \infty$, Ref.\cite{M}.  Notice the
analogy: fixed trace $\beta$-HE bears the same relationship to
$\beta$-HE that the microcanonical ensembles to the canonical
ensemble in statistical physics \cite{Balian}. Besides, G .Akemann
et al \cite{AC} described further interesting physical features of
fixed trace ensembles due to the interaction among eigenvalues
introduced through a constraint.

As done usually for $\beta$-HE in Eq. (\ref{density}), the density
of fixed trace $\beta$-HE is written in the form
\begin{equation}
\label{densityforfixedtrace}
 \rho_{\beta
FTE_{N}}(x_{1})=\int_{\Omega_{N-1}} P_{\beta
FTE_{N}}(x_{1},x_{2},\cdots,x_{N}) d\,\sigma_{N-1}
\end{equation}
where $\Omega_{N-1}$ denotes the sphere $x^{2}_{2}+\cdots+
x^{2}_{N}=N(N-1)/2-x^{2}_{1}$, and $d\,\sigma_{N-1}$ denotes the
spherical measure.



The important thing to be noted about fixed trace GOE, GUE and GSE
is their moment equivalence with the associated Gaussian ensembles
of large dimensions (implying the semicircle law), see Mehta's book
\cite{M}, \textbf{Sect}.27.1, p.488. At the end of this section,
p.490, he writes:
\begin{quote}
It is not very clear whether this moment equivalence implies that
all local statistical properties of the eigenvalues in two sets of
ensembles are identical. This is so because these local properties
of eigenvalues may not be expressible only in terms of finite
moments of the matrix elements. \end{quote} In the Appendix of this
paper, Combining Rosenzweig's method \cite{M,Rosenzweig} and
Dumitriu and Edelman's matrix models (\ref{matrixmodel}), we present
the moment equivalence between fixed trace $\beta$-HE and $\beta$-HE
in the large $N$, which implies that the global density of
fixed-trace $\beta$-HE also fits the semi-circle law for all
$\beta$. We will derive the semicircle law using another quite
different method, and prove that the property of the spectrum edge
for $\beta$-HE implies the same property for fixed trace $\beta$-HE.
Recently, G$\ddot{o}$tze et al \cite{GG1,G3} have proven
universality of sine-kernel in the bulk for fixed GUE. In \cite{LZ},
asymptotic equivalence of local properties for correlation functions
at zero and the edge of the spectrum between fixed trace $\beta$-HE
and $\beta$-HE is proved, which implies universality of sine-kernel
at zero and airy-kernel at the edge for fixed trace GOE, GUE and
GSE. All these known results (to our knowledge), to some extent,
answer this open problem.

%

Now we can state our main results. Let $C_{c}(\mathbb{R})$ be the
set of all continuous functions on $\mathbb{R}$ with compact
support. For fixed trace $\beta$-HE, the scaled eigenvalue density
satisfies the Wiger semicircle law, i.e.,

\begin{theorem}
\label{theorem:global} Let $\rho_{\beta FTE_{N}}(x_{1})$ be the
eigenvalue density for fixed trace  $\beta$-HE, defined by
(\ref{densityforfixedtrace}). If $f(x)\in C_{c}(\mathbb{R})$, then
we have
\begin{equation}
\lim_{N\rightarrow \infty}\int_{\mathbb{R}}f(x)\sqrt{2N}\rho_{\beta
FTE_{N}}(\sqrt{2N}x)=\int_{\mathbb{R}}f(x)\rho_{\mathrm{W}}(x)dx
\nonumber
\end{equation}
where
\begin{equation*}
\rho_{\mathrm{W}}(x):=\begin{cases}
    \displaystyle\frac{2}{\pi}\sqrt{1-x^2}&  -1< x< 1, \\
    \displaystyle 0&
    |x|\geq 1.
    \end{cases}
 \end{equation*}
\end{theorem}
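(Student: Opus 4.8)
The plan is to establish the semicircle law for the fixed trace ensemble by transferring it from the already-known semicircle law for the ordinary $\beta$-HE, stated in (\ref{semicirclelaw}). The key structural fact I would exploit is that the fixed trace density arises from the unconstrained $\beta$-HE density by conditioning on the value of $\sum_i x_i^2$; equivalently, the two joint p.d.f.'s differ only by the Gaussian factor $\prod_i e^{-x_i^2/2}$ versus the delta constraint $\delta(\sum_i x_i^2 - N(N-1)/2)$. Concretely, I would first write an exact integral relation between $\rho_{\beta FTE_N}$ and $\rho_{\beta HE_N}$ by introducing a scaling parameter. Writing the Gaussian weight in polar-type coordinates $(r,\omega)$ with $r^2 = \sum_i x_i^2$ and $\omega$ ranging over the sphere, the Vandermonde factor $\prod_{j<k}|x_j-x_k|^\beta$ is homogeneous of degree $\beta N(N-1)/2$, so rescaling $x \mapsto r\,x$ separates the radial integral (a single one-dimensional Gaussian-type integral over $r$) from the spherical integral (which is exactly the fixed trace object). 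This yields a clean formula expressing the $\beta$-HE density as a superposition over radii of scaled fixed trace densities.

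Second, I would invert this relation. The radial weight appearing in the superposition concentrates sharply, as $N\to\infty$, around the typical radius $r \approx \sqrt{N(N-1)/2}$, because $\mathrm{tr}\,H_\beta^2$ has expectation $N(N-1)/2 + N/\beta$ and fluctuations of lower order (this is precisely the point behind the choice $r^2 = N(N-1)/2$ noted after (1.13)). Thus, in the limit, testing $\rho_{\beta HE_N}$ against a smooth compactly supported $f$ and using dominated convergence, the integral against the concentrating radial measure picks out the fixed trace density evaluated at the typical radius. Matching the two scalings $\sqrt{2\beta N}$ (for $\beta$-HE) and $\sqrt{2N}$ (for the fixed trace ensemble, as in the theorem statement) amounts to checking that $\sqrt{2\beta N} \cdot r/\sqrt{N(N-1)/2} \to \sqrt{2N}$, which holds since $r \approx \sqrt{N(N-1)/2}$ gives the factor of $\sqrt{\beta}$ correctly and accounts for the difference between the two normalizations.

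Third, I would make the concentration argument rigorous. The cleanest route is to compute the Laplace/Mellin transform (in the variable dual to $r^2$) of both sides of the exact radial-separation identity; the transform converts the convolution-type superposition into a product, and the large-$N$ asymptotics of the ratio of normalization constants $Z_{\beta HE_N}/Z_{\beta FTE_N}$ (both given explicitly in (1.2) and (1.12) in terms of Gamma functions) supply the sharp asymptotics of the radial weight via Stirling's formula. Equivalently, one shows the radial probability density $g_N(r)$ satisfies $\int g_N(r)\,\phi(r/\sqrt{N(N-1)/2})\,dr \to \phi(1)$ for continuous bounded $\phi$, i.e. it converges weakly to a point mass at the typical radius.

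The main obstacle I anticipate is controlling the interchange of the limit $N\to\infty$ with the radial integral uniformly enough to conclude weak convergence of the scaled fixed trace density, rather than merely pointwise behavior at the typical radius. The delicacy is that the concentration of $g_N$ and the (known) convergence of $\sqrt{2\beta N}\rho_{\beta HE_N}(\sqrt{2\beta N}\,\cdot\,)$ must be combined: one is extracting the value of a sequence of measures converging weakly to $\rho_{\mathrm W}$ at a point, which is not automatically legitimate. I would resolve this by working entirely at the level of integrals against a fixed test function $f\in C_c(\mathbb R)$, using the exact identity to write $\int f\,\sqrt{2N}\rho_{\beta FTE_N}(\sqrt{2N}\,\cdot\,)$ as a quotient of two $\beta$-HE integrals (numerator with $f$ inserted after the appropriate rescaling, denominator the normalizing radial integral), and then applying (\ref{semicirclelaw}) together with the explicit $Z$-ratio asymptotics to each. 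Because everything is tested against the single bounded continuous $f$, the concentration of the radial weight and the semicircle limit decouple cleanly, and the limit $\int_{\mathbb R} f(x)\rho_{\mathrm W}(x)\,dx$ follows.
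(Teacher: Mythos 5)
Your proposal follows the same skeleton as the paper's proof: the exact radial identity (\ref{integral equation}) relating $\rho_{\beta HE_N}$ to the strength-one fixed trace density, concentration of the radial weight, and transfer of the known limit (\ref{semicirclelaw}) by testing against $f\in C_c(\mathbb{R})$, using uniform continuity and the fact that the densities integrate to one. However, your second step contains a genuine quantitative error which, carried out as written, proves a false statement. Under the paper's normalization (1.1) (weight $e^{-x_i^2/2}$, with no $\beta$ in the exponent), the expectation of $\sum_i x_i^2$ is $N_\beta=N+\beta N(N-1)/2$, not $N(N-1)/2+N/\beta$; the latter figure, quoted in the remark after (1.13), belongs to Mehta's convention with weight $e^{-\beta x^2/2}$, and the paper's own Appendix computes $\langle\mathrm{tr}\,H_\beta^2\rangle=2(N/2+\beta N(N-1)/4)=N_\beta$. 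Accordingly, the radial kernel $e^{-r^2/2}r^{N_\beta-2}$ in (\ref{integral equation}) concentrates at $r\approx\sqrt{N_\beta}\approx\sqrt{\beta}\,N/\sqrt{2}=\sqrt{\beta}\sqrt{N(N-1)/2}\,(1+o(1))$ --- exactly the window $\sqrt{\beta}N(1/\sqrt{2}\pm\alpha_N)$ used in Lemma \ref{lemma:Globa almost equivalent} --- and \emph{not} at $\sqrt{N(N-1)/2}$ as you assert. That extra $\sqrt{\beta}$ is precisely what converts the $\beta$-HE scale $\sqrt{2\beta N}$ into the fixed-trace scale $\sqrt{2N}$: with $x_1=\sqrt{2\beta N}\,x$ and $r\approx\sqrt{\beta}\sqrt{N(N-1)/2}$, relation (\ref{relation}) gives $\rho_{\beta FTE_N,1}(x_1/r)\approx\sqrt{N(N-1)/2}\,\rho_{\beta FTE_N}(\sqrt{2N}x)$. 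Your own consistency check exposes the problem: substituting your $r\approx\sqrt{N(N-1)/2}$ into $\sqrt{2\beta N}\cdot r/\sqrt{N(N-1)/2}$ yields $\sqrt{2\beta N}$, not $\sqrt{2N}$, so it does not ``give the factor of $\sqrt{\beta}$ correctly''; followed literally, your argument would conclude $\sqrt{2\beta N}\,\rho_{\beta FTE_N}(\sqrt{2\beta N}x)\to\rho_{\mathrm{W}}(x)$ weakly, which contradicts the theorem for every $\beta\neq 1$.

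Once the concentration radius is corrected, your plan does go through and is essentially the paper's argument (Lemma \ref{lemma:Globa almost equivalent} followed by the test-function step in Section 3), with one difference worth keeping: the paper first proves a \emph{pointwise} almost-equivalence, and for that it needs an a priori sup bound on the fixed trace density (Proposition \ref{proposition 1}, obtained from Stieltjes's theorem on the maximum of the Vandermonde determinant on the sphere) to dominate the radial tails, at the cost of restricting $\theta<0.5$. Your purely weak formulation --- Fubini plus the substitution $y=x_1/r$, using that both the radial law $C_{N\beta}^{-1}e^{-r^2/2}r^{N_\beta-1}dr$ and $\rho_{\beta FTE_N,1}(y)\,dy$ are probability measures --- needs only the Gamma-tail estimates (\ref{1})--(\ref{2}) and bypasses the Stieltjes bound entirely; this is, in effect, how the paper itself later handles Theorem \ref{theorem edge}. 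The Mellin/Laplace-transform machinery proposed in your third step is unnecessary for this: direct concentration of the radial probability measure, combined with the compact support and uniform continuity of $f$, suffices.
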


At the edge of the spectrum we can prove that that the property of
the spectrum edge for $\beta$-HE implies the same property for fixed
trace $\beta$-HE, thus we extend Desrosiers and Forrester's result
for even $\beta$ to fixed trace case.
\begin{theorem}
\label{theorem edge} Let $\rho_{\beta H E_{N}}(x_{1})$ and
$\rho_{\beta FTE_{N}}(x_{1})$ be the eigenvalue density of
$\beta$-HE and that of fixed trace, respectively,  defined by
(\ref{density}) and (\ref{densityforfixedtrace}). Assume that
$f(x)\in C_{c}(\mathbb{R})$. If $\forall\, h(t)\in
C_{c}(\mathbb{R})$,
\begin{align}
\label{conditon 1}
\lim_{N\rightarrow\infty}\int_{\mathbb{R}}h(t)\frac{\sqrt{\beta}N^{5/6}}{\sqrt{2}}\rho_{\beta
HE_{N}}\left(\sqrt{2 \beta N}(1+\frac{t}{2N^{2/3}})\right)\,dt
\end{align}
exists, then
\begin{multline}
\lim_{N\rightarrow\infty}\int_{\mathbb{R}}f(x)\frac{N^{5/6}}{\sqrt{2}}\rho_{\beta
FTE_{N}}\left(\sqrt{2N}(1+\frac{x}{2N^{2/3}})\right)dx\\ =
\lim_{N\rightarrow\infty}\int_{\mathbb{R}}f(t)\frac{\sqrt{\beta}N^{5/6}}{\sqrt{2}}\rho_{\beta
HE_{N}}\left(\sqrt{2 \beta N}(1+\frac{t}{2N^{2/3}})\right)\,dt.
\end{multline}
\end{theorem}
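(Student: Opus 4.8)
The plan is to realise the $\beta$-HE eigenvalue distribution as an exact average of fixed-trace distributions over a fluctuating radius, and then to show that under the edge scaling this radius concentrates so sharply that the average is asymptotically a point mass at the fixed-trace radius. Set $r_0^2=N(N-1)/2$ and $N_\beta=N+\beta N(N-1)/2$, and for a squared radius $t>0$ let
\[
Q(x_1;t)=\int_{\mathbb{R}^{N-1}}\delta\Big(\sum_{i=1}^N x_i^2-t\Big)\prod_{1\le j<k\le N}|x_j-x_k|^\beta\,dx_2\cdots dx_N ,
\]
the unnormalised one-point function of the fixed-trace ensemble of squared radius $t$; thus $\rho_{\beta FTE_N}$ is the normalisation of $Q(\cdot;r_0^2)$. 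Because the Vandermonde factor is homogeneous of degree $\beta N(N-1)/2$, the substitution $x_i\mapsto\sqrt t\,x_i$ gives $Q(x_1;t)=t^{N_\beta/2-3/2}Q(x_1/\sqrt t;1)$. Inserting $1=\int_0^\infty\delta(\sum x_i^2-t)\,dt$ into the definition (\ref{density}) of $\rho_{\beta HE_N}$ and using this homogeneity then yields, for every $g\in C_c(\mathbb{R})$,
\[
\int_{\mathbb{R}}g\,\rho_{\beta HE_N}\,dx_1=\int_0^\infty w(t)\,\langle g\rangle_{t}\,dt,\qquad \int_{\mathbb{R}}g\,\rho_{\beta FTE_N}\,dx_1=\langle g\rangle_{r_0^2},
\]
where $\langle g\rangle_t$ is the fixed-trace expectation at squared radius $t$ and $w(t)\propto e^{-t/2}t^{N_\beta/2-1}$ is the Gamma density of shape $N_\beta/2$ and scale $2$. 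The first task is to verify these two identities (with both $\langle\,\cdot\,\rangle$ normalised to mass one) together with the scaling relation $\langle g\rangle_t=\langle g(\sqrt t/r_0\,\cdot)\rangle_{r_0^2}$, which merely records that rescaling all eigenvalues by $\sqrt t/r_0$ maps the radius-$r_0$ ensemble onto the radius-$\sqrt t$ ensemble.

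Next I specialise $g$ to the edge test functions. With the affine edge map $a(y)=\sqrt2\,N^{1/6}y-2N^{2/3}$ one has $a\big(\sqrt{2N}(1+\xi/2N^{2/3})\big)=\xi$, and a change of variables identifies the two sides of the theorem (taking $h=f$) with $\langle g^{FTE}_N\rangle_{FTE}$ and $\langle g^{HE}_N\rangle_{HE}$, where $g^{FTE}_N(y)=N f(a(y))$ and $g^{HE}_N(y)=g^{FTE}_N(y/\sqrt\beta)$. Feeding this into the mixture identity and the scaling relation gives
\[
\langle g^{HE}_N\rangle_{HE}=\int_0^\infty \tilde w(\lambda)\,A_N(\lambda)\,d\lambda,\qquad A_N(\lambda):=\langle g^{FTE}_N(\lambda\,\cdot)\rangle_{FTE},\qquad \langle g^{FTE}_N\rangle_{FTE}=A_N(1),
\]
where $\lambda=\sqrt t/(\sqrt\beta r_0)$ and $\tilde w$ is the (probability) density of $\lambda$ induced by $w$. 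Since $t$ has mean $N_\beta$ and variance $2N_\beta$ with $N_\beta\sim\beta N^2/2$, the variable $\lambda$ concentrates at $\sqrt{N_\beta}/(\sqrt\beta r_0)=1+O(1/N)$ with fluctuations of order $1/N$. The theorem thus reduces to the single statement $\int_0^\infty\tilde w(\lambda)\,[A_N(\lambda)-A_N(1)]\,d\lambda\to0$: granting it, $\lim_N\langle g^{FTE}_N\rangle_{FTE}$ exists and equals $\lim_N\langle g^{HE}_N\rangle_{HE}$, which exists by hypothesis (\ref{conditon 1}).

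The mechanism is that a radius change by $\lambda=1+O(1/N)$ moves the edge only $O(N^{-1/3})$ on the edge scale. Indeed $A_N(\lambda)-A_N(1)=\int N[f(a(\lambda x))-f(a(x))]\rho_{\beta FTE_N}(x)\,dx$, and on the edge window $W_N=\{x:|a(x)|\le M\}$ (with $\mathrm{supp}\,f\subset[-M,M]$) one has $|a(\lambda x)-a(x)|=\sqrt2 N^{1/6}|x|\,|\lambda-1|\lesssim N^{2/3}|\lambda-1|$, which is $o(1)$ as soon as $|\lambda-1|\lesssim(\log N)/N$. Splitting the $\lambda$-integral at this threshold, on the ``core'' $|\lambda-1|\le(\log N)/N$ one gets $|A_N(\lambda)-A_N(1)|\le\omega_f(o(1))\int_{W_N}N\rho_{\beta FTE_N}$, with $\omega_f$ the modulus of continuity of $f$; off the core the crude bound $|A_N(\lambda)|\le\|f\|_\infty N$ together with the Gaussian tail of $\tilde w$ makes the contribution $\lesssim\|f\|_\infty N e^{-c\log^2 N}\to0$. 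Thus everything comes down to an a priori bound on $\int_{W_N}N\rho_{\beta FTE_N}$.

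The main obstacle is precisely this tightness estimate,
\[
\sup_N\int_{W_N}N\rho_{\beta FTE_N}(x)\,dx<\infty,
\]
which cannot be assumed, being in essence the conclusion sought. I would derive it, without circularity, from the $\beta$-HE hypothesis itself. Fix a nonnegative $\psi\in C_c(\mathbb{R})$ with $\psi\ge\mathbf 1_{[-M,M]}$ and set $g^{FTE}_{N,\psi}(y)=N\psi(a(y))$, $g^{HE}_{N,\psi}=g^{FTE}_{N,\psi}(\cdot/\sqrt\beta)$. For every $\lambda$ in the core the edge window at radius $\sqrt t$ is a shift of $W_N$ by an amount that is $o(1)$ on the edge scale, so by nonnegativity $\langle g^{FTE}_{N,\psi}(\lambda\,\cdot)\rangle_{FTE}\ge\int_{W_N'}N\rho_{\beta FTE_N}$ for a fixed slightly smaller window $W_N'$, uniformly over the core. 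Integrating against $\tilde w$, discarding the off-core part and using that the core carries $\tilde w$-mass $\ge1/2$ for large $N$, gives $\int_{W_N'}N\rho_{\beta FTE_N}\le2\,\langle g^{HE}_{N,\psi}\rangle_{HE}$, whose right-hand side is bounded because (\ref{conditon 1}) with $h=\psi$ has a finite limit. As $M$ is arbitrary this is the desired tightness, and the previous paragraph then closes the argument. What remains is bookkeeping: justifying the $\delta$-function manipulations — equivalently reconciling the spherical-measure definition (\ref{densityforfixedtrace}) with the marginal form of $Q$, the discrepancy being the factor $2\sqrt{r_0^2-x_1^2}\sim2r_0$ that is constant across the edge window and absorbed into the normalisation — and tracking $Z_{\beta HE_N},Z_{\beta FTE_N}$ so that both averages are genuine probabilities.
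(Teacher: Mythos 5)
Your proposal is correct and follows essentially the same route as the paper: your radius-mixture identity is the paper's integral equation (\ref{integral equation}), your core/off-core split with Gamma-tail concentration matches the paper's decomposition $I_1+I_2+I_3$ with $\alpha_N=N^{-\theta}$, your uniform-continuity argument on the core is the paper's treatment of $I_2$, and your tightness bound derived from the hypothesis via a dominating cutoff $\psi\ge\mathbf{1}_{[-M,M]}$ is precisely the paper's Lemma \ref{mainlemma} (there with a smooth cutoff $\phi$). The only differences are presentational (probabilistic concentration language, a $(\log N)/N$ core instead of $N^{-\theta}$ with $2/3<\theta<1$, and deriving rather than citing the mixture identity).
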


It immediately follows from (\ref{edge124}), (\ref{generaledge}) and
Theorem \ref{theorem edge} that
\begin{corollary}
Let $\rho_{\beta FTE_{N}}(x_{1})$ be the eigenvalue density for
fixed trace  $\beta$-HE, defined by (\ref{densityforfixedtrace}). If
$f(x)\in C_{c}(\mathbb{R})$, then at the edge of the spectrum one
has
\begin{equation}
\lim_{N\rightarrow\infty}\int_{\mathbb{R}}f(x)\frac{N^{5/6}}{\sqrt{2}}\rho_{\beta
FTE_{N}}\left(\sqrt{2N}(1+\frac{x}{2N^{2/3}})\right)dx=\int_{\mathbb{R}}f(t)\mathbf{Ai}_{\beta}(t)dt
\end{equation} for $\beta=1,2,4$
 and
\begin{multline*}\lim_{N\rightarrow\infty}\int_{\mathbb{R}}f(x)\frac{N^{5/6}}{\sqrt{2}}\rho_{\beta FTE_{N}}\left(\sqrt{2N}(1+\frac{x}{2N^{2/3}})\right)dx\\
 =\frac{1}{2\pi}\left(\frac{4\pi}{\beta}\right)^{\beta/2}\frac{\Gamma(1+\beta/2)}{\prod_{j=2}^\beta
 {\Gamma(1+2/\beta)}^{-1}\Gamma(1+2j/\beta)}\,\int_{\mathbb{R}}f(t)K_{\beta,\beta}(t)dt\end{multline*}
for even $\beta$. Here $\mathbf{Ai}_{\beta}(x)$ and
$K_{\beta,\beta}(x)$ are defined by (\ref{need1}) and (\ref{Kbeta})
respectively.

\end{corollary}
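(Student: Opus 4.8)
The plan is to read the Corollary as a direct consequence of Theorem \ref{theorem edge}, whose sole hypothesis is that the edge-scaled integral (\ref{conditon 1}) converges for every $h\in C_c(\mathbb{R})$. The whole task therefore reduces to verifying this hypothesis for the ordinary $\beta$-HE; once it holds, the identity furnished by Theorem \ref{theorem edge} transfers the limit to the fixed trace ensemble, and the explicit evaluations (\ref{edge124}) and (\ref{generaledge}) pin down its value.

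First I would establish the convergence of (\ref{conditon 1}). The statements (\ref{edge124}) and (\ref{generaledge}) supply the \emph{pointwise} limit of the scaled density $\frac{\sqrt{\beta}N^{5/6}}{\sqrt{2}}\rho_{\beta HE_N}(\sqrt{2\beta N}(1+\frac{t}{2N^{2/3}}))$, namely $\mathbf{Ai}_{\beta}(t)$ for $\beta=1,2,4$ and the (suitably normalized) Konstevich expression on the right-hand side of (\ref{generaledge}) for even $\beta$. To pass from pointwise to integral convergence against $h$, I would upgrade this to local uniform convergence. Since $h$ is supported in some $[-R,R]$, it suffices to bound the scaled densities uniformly on $[-R,R]$ by a constant independent of $N$ and invoke dominated convergence; equivalently, one observes that the saddle-point estimates underlying (\ref{edge124}) and (\ref{generaledge}) are uniform on compact $t$-intervals. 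Granting this, $\int_{\mathbb{R}}h(t)\frac{\sqrt{\beta}N^{5/6}}{\sqrt{2}}\rho_{\beta HE_N}(\cdots)\,dt$ converges to $\int_{\mathbb{R}}h(t)\mathbf{Ai}_{\beta}(t)\,dt$ (respectively the Konstevich integral), so (\ref{conditon 1}) holds and its value is identified.

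Second, I would apply Theorem \ref{theorem edge} with the given $f\in C_c(\mathbb{R})$ playing the role of $h$. Its conclusion equates the fixed-trace edge limit on the left with the $\beta$-HE edge limit on the right; combining this equality with the evaluation from the previous paragraph yields precisely the two asserted formulas, the Airy expression $\int_{\mathbb{R}}f(t)\mathbf{Ai}_{\beta}(t)\,dt$ when $\beta=1,2,4$ and the Konstevich expression when $\beta$ is even.

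The main obstacle is the interchange of limit and integral in the first step: the cited results (\ref{edge124}) and (\ref{generaledge}) are phrased pointwise, whereas Theorem \ref{theorem edge} requires convergence of the integrated quantity. The cleanest route is to extract a uniform-on-compacts bound for the edge-scaled density from the asymptotic analyses in \cite{G1,G2,DF}; because every test function here has compact support, such a bound is all that is required and no control of the tails enters. For $\beta=1,2,4$ the relevant kernels converge uniformly on compact sets to smooth Airy-type functions, so the bound is immediate, while for general even $\beta$ the same uniformity is already present in the steepest-descent argument of Desrosiers and Forrester, rendering the dominated-convergence step routine.
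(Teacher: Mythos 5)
Your proposal is correct and is essentially the paper's own argument: the paper obtains the corollary by combining the cited edge limits (\ref{edge124}) and (\ref{generaledge}) with Theorem \ref{theorem edge}, exactly as you do. The only difference is that you make explicit the pointwise-to-weak upgrade (local uniform bounds on compacts plus dominated convergence) needed to verify the hypothesis (\ref{conditon 1}), a step the paper leaves implicit in the phrase ``immediately follows.''
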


Theorem 1 will be proved in Sect. 3 and Theorem 2 in Sect. 4 after
the preparatory work in Sect. 2.

\section{An upper bound for the level density}

 \label{3} In this section, we will
give an estimation of the level density for fixed trace $\beta$-HE,
with the help of the maximum of Vandermonde determinant on the
sphere by Stieltjes \cite{S}. For the convenience of our argument,
let's first state Stieltjes's remarkable result as a lemma.
\begin{lemma}
Let us consider a unit mass at each of the variable points
$x_{1},x_{2},\cdots,x_{N}$ in the interval $[-\infty,+\infty]$ such
that
\begin{equation}
\sum_{i=1}^{N}x_{i}^{2}\leq \frac{N(N-1)}{2}\nonumber,
\end{equation}
then the maximal of
\begin{equation}
V(x_{1},x_{2},\cdots,x_{N})=\prod_{1\leq j<k \leq
N}|x_{j}-x_{k}|^{2}\nonumber
\end{equation}
is attained if and only if the ${x_{j}}$ are the zeros of the
Hermite polynomial $H_{N}(x)$, and the maximal is
\begin{equation}
\max_{\sum_{i=1}^{N}x_{i}^{2}\leq \frac{N(N-1)}{2}}
V(x_{1},x_{2},\cdots,x_{N})=2^{-\frac{N(N-1)}{2}}\prod_{v=1}^{N}e^{v\ln
v}.
\end{equation}
\end{lemma}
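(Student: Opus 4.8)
The plan is to read $V$ as an electrostatic energy and find its maximum by a constrained critical-point analysis, then identify the maximizing configuration with the zeros of $H_N$ and evaluate $V$ there via the Hermite discriminant. First I would record two elementary reductions. Since the closed ball $\sum_i x_i^2\le N(N-1)/2$ is compact and $V$ is continuous, the maximum is attained; and since $V$ takes strictly positive values, at any maximizer the points $x_j$ are pairwise distinct (coincidences give $V=0$). Next, each factor $|x_j-x_k|^2$ is homogeneous of degree $2$, so $V$ is homogeneous of degree $N(N-1)>0$; hence dilating any admissible point with $\sum_i x_i^2<N(N-1)/2$ toward the boundary strictly increases $V$, which forces every maximizer onto the sphere $\sum_i x_i^2=N(N-1)/2$.

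On the open part of the sphere where the coordinates are distinct, $\log V$ is smooth, so a maximizer is a constrained critical point and Lagrange multipliers yield $\sum_{j\ne i}\frac{1}{x_i-x_j}=\lambda\,x_i$ for every $i$. To pin down $\lambda$, I would multiply by $x_i$, sum over $i$, and use the pairing identity $\frac{x_i}{x_i-x_j}+\frac{x_j}{x_j-x_i}=1$: the left-hand side collapses to $\binom{N}{2}=N(N-1)/2$, while the right-hand side is $\lambda\sum_i x_i^2=\lambda\,N(N-1)/2$, so $\lambda=1$. Thus any maximizer satisfies the equilibrium relations $\sum_{j\ne i}\frac{1}{x_i-x_j}=x_i$.

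To convert these relations into the Hermite equation, set $p(x)=\prod_i(x-x_i)$, which is monic of degree $N$. The elementary identity $p''(x_i)/p'(x_i)=2\sum_{j\ne i}1/(x_i-x_j)$ then gives $p''(x_i)=2x_i\,p'(x_i)$ at each root. Consequently $R(x):=p''(x)-2x\,p'(x)+2N\,p(x)$ has degree at most $N-1$ (the degree-$N$ terms of $-2xp'$ and $2Np$ cancel), yet it vanishes at all $N$ distinct roots $x_i$; a polynomial of degree $\le N-1$ with $N$ distinct zeros is identically zero, so $R\equiv 0$. This is exactly the Hermite differential equation, whence $p=2^{-N}H_N$ and the $x_j$ are the zeros of $H_N$. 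Conversely, the zeros of $H_N$ satisfy $\sum_i x_i^2=N(N-1)/2$ and the same equilibrium relations, and all orderings give the same value of $V$; since they exhaust the critical configurations and the maximum exists, they are precisely the maximizers, which establishes the ``if and only if.''

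Finally I would evaluate $V$ at the Hermite zeros. Writing $V=\prod_{i<j}(x_i-x_j)^2=\operatorname{disc}(H_N)/(2^N)^{2N-2}$ and inserting the classical discriminant $\operatorname{disc}(H_N)=2^{3N(N-1)/2}\prod_{v=1}^N v^v$ gives $V_{\max}=2^{-N(N-1)/2}\prod_{v=1}^N v^v$, i.e. $2^{-N(N-1)/2}\prod_{v=1}^N e^{v\ln v}$, as claimed. The hard part will be this last step, namely establishing the Hermite discriminant: I would derive it by first writing $V=(-1)^{\binom{N}{2}}\prod_i p'(x_i)$, then using $H_N'=2N\,H_{N-1}$ to reduce $\prod_i p'(x_i)$ to the resultant $\operatorname{Res}(H_N,H_{N-1})$, and evaluating that resultant from the three-term recurrence for the Hermite polynomials.
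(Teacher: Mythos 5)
Your proof is correct, but it is worth noting that the paper itself offers no proof of this lemma at all: it is quoted as Stieltjes's classical result with a citation to Szeg\H{o}'s \emph{Orthogonal Polynomials}, and the authors only remark in passing that it admits an electrostatic interpretation. What you have written is a sound, self-contained reconstruction of precisely that classical argument. Your chain of reductions is valid: compactness gives existence; positivity of the maximum forces distinct coordinates; homogeneity of degree $N(N-1)$ pushes every maximizer onto the sphere $\sum_i x_i^2 = N(N-1)/2$; the Lagrange condition $\sum_{j\neq i}(x_i-x_j)^{-1}=\lambda x_i$ with the pairing identity pins down $\lambda=1$; the identity $p''(x_i)=2x_ip'(x_i)$ together with the degree count on $R(x)=p''(x)-2xp'(x)+2Np(x)$ forces $p=2^{-N}H_N$; and the converse direction (Hermite zeros lie on the sphere, since $\sum_i x_i = 0$ and $\sum_{i<j}x_ix_j = -N(N-1)/4$, and satisfy the same equilibrium relations) closes the ``if and only if.'' The final evaluation via $V=\operatorname{disc}(H_N)\,(2^N)^{-(2N-2)}$ and the classical discriminant $\operatorname{disc}(H_N)=2^{3N(N-1)/2}\prod_{v=1}^{N}v^{v}$ reproduces the stated value $2^{-N(N-1)/2}\prod_{v=1}^{N}e^{v\ln v}$ exactly (one can sanity-check $N=2$: roots $\pm 1/\sqrt{2}$, $V=2$). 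The one step you flag as ``hard,'' the discriminant formula, is indeed the only place where real computation remains, and your proposed route through $\prod_i p'(x_i)$, the identity $H_N'=2NH_{N-1}$, and the resultant $\operatorname{Res}(H_N,H_{N-1})$ evaluated from the three-term recurrence is the standard and workable one; it is the same computation carried out in Szeg\H{o}'s book. In short: the paper buys the lemma by citation, while you have supplied the underlying proof; nothing in your argument is circular or gapped, though for full rigor the resultant computation would need to be written out rather than sketched.
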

Note that Stieltjes's result can be interpreted as a electrostatic
problem and the maximum position corresponds to the condition of
electrostatic equilibrium.

The following proposition gives an estimation of the level density,
which gives a global control of the density for fixed trace
$\beta$-HE.
\begin{proposition}
\label{proposition 1} For the level density $\rho_{\beta
FTE_{N}}(x_{1})$, rescaling
$$x_{1}=\sqrt{N(N-1)/2}\, x,\ -1\leq x\leq 1,$$ then we have
\begin{equation} \rho_{\beta
FTE_{N}}\Big(\sqrt{\frac{N(N-1)}{2}}\,x\Big)\leq
e^{W_{N\beta}N}(1-x^{2})^{\frac{N-2}{2}}
\end{equation}
where $W_{N\beta}=\ln C_{\beta}+o(1)$ and
\begin{equation}
C_{\beta}=\exp\big(1-\ln\sqrt{2\pi}+\frac{\beta}{2}-\frac{\beta}{2}\ln(\frac{\beta}{2})\big)\,
\Gamma(1+\frac{\beta}{2}).
\end{equation}
\end{proposition}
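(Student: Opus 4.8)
The plan is to bound the Vandermonde factor in the density by its global maximum on the sphere, which is supplied by Stieltjes's Lemma, and then to evaluate the purely geometric integral that remains.

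First I would start from the definition (\ref{densityforfixedtrace}), in the form
\[\rho_{\beta FTE_{N}}(x_{1})=\frac{1}{Z_{\beta FTE_{N}}}\int_{\Omega_{N-1}}\prod_{1\le j<k\le N}|x_j-x_k|^{\beta}\,d\sigma_{N-1},\]
and make the key observation that every configuration $(x_1,\dots,x_N)$ with $(x_2,\dots,x_N)\in\Omega_{N-1}$ satisfies $\sum_{i=1}^{N}x_i^{2}=N(N-1)/2$ exactly, since the fixed coordinate $x_1$ is itself part of this constraint. Hence the full $N$-point configuration meets the hypothesis of the preceding Lemma, so that
\[\prod_{1\le j<k\le N}|x_j-x_k|^{2}\le 2^{-\frac{N(N-1)}{2}}\prod_{v=1}^{N}e^{v\ln v}=:M\]
uniformly over $\Omega_{N-1}$. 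Raising to the power $\beta/2$ gives $\prod_{1\le j<k\le N}|x_j-x_k|^{\beta}\le M^{\beta/2}$, a bound independent of the integration variables.

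The dependence on $x$ is thereby isolated in a geometric factor: pulling $M^{\beta/2}$ out of the integral leaves $\int_{\Omega_{N-1}}d\sigma_{N-1}$, which (with the same spherical-measure convention used to compute $Z_{\beta FTE_{N}}$) is the surface area of the $(N-2)$-sphere of radius $R=\sqrt{(N(N-1)/2)(1-x^{2})}$ in $\mathbb{R}^{N-1}$, namely $\frac{2\pi^{(N-1)/2}}{\Gamma((N-1)/2)}R^{N-2}$. Since $R^{N-2}=\big(\tfrac{N(N-1)}{2}\big)^{(N-2)/2}(1-x^{2})^{(N-2)/2}$, this produces exactly the advertised factor $(1-x^{2})^{(N-2)/2}$, and I would collect
\[\rho_{\beta FTE_{N}}\Big(\sqrt{\tfrac{N(N-1)}{2}}\,x\Big)\le \underbrace{\frac{M^{\beta/2}}{Z_{\beta FTE_{N}}}\,\frac{2\pi^{(N-1)/2}}{\Gamma((N-1)/2)}\Big(\tfrac{N(N-1)}{2}\Big)^{\frac{N-2}{2}}}_{=:e^{W_{N\beta}N}}\,(1-x^{2})^{\frac{N-2}{2}}.\]

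It then remains to show $W_{N\beta}=\frac{1}{N}\ln(\cdots)=\ln C_{\beta}+o(1)$. Taking logarithms and inserting the explicit formula for $Z_{\beta FTE_{N}}$, I would apply Stirling's formula to each ingredient: the hyperfactorial via the Glaisher--Kinkelin estimate $\sum_{v=1}^{N}v\ln v=\tfrac{N^{2}}{2}\ln N-\tfrac{N^{2}}{4}+\tfrac{N}{2}\ln N+O(\ln N)$, together with the Gamma functions $\Gamma(N_{\beta}/2)$, $\Gamma(1+j\beta/2)$ and $\Gamma((N-1)/2)$, where $N_{\beta}=N+\beta N(N-1)/2=\tfrac{\beta}{2}N^{2}+(1-\tfrac{\beta}{2})N$. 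The main obstacle is the asymptotic bookkeeping: the factors $M^{\beta/2}$, the power of $N(N-1)/2$, the term $2^{-N_{\beta}/2}$ and $\Gamma(N_{\beta}/2)$ each contribute pieces of orders $N^{2}\ln N$, $N^{2}$ and $N\ln N$, and one must verify that all of these super-linear contributions cancel exactly, so that dividing by $N$ leaves a finite limit. The surviving $O(N)$ terms then assemble into $\ln C_{\beta}$, with the $-\ln\sqrt{2\pi}$ coming from $(2\pi)^{-N/2}$ in $Z_{\beta FTE_{N}}^{-1}$, the factor $\Gamma(1+\tfrac{\beta}{2})$ from $\Gamma(1+\tfrac{\beta}{2})^{-N}$ in $Z_{\beta FTE_{N}}$, and the remaining $1+\tfrac{\beta}{2}-\tfrac{\beta}{2}\ln(\tfrac{\beta}{2})$ from the Stirling expansion of $\Gamma(N_{\beta}/2)$ balanced against the $\beta$-dependent powers of $N(N-1)/2$ and of $2$; I would track these terms explicitly to confirm the stated value of $C_{\beta}$.
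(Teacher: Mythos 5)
Your proposal is correct and follows essentially the same route as the paper: bound the Vandermonde factor on the sphere by Stieltjes's maximum (the constraint holds with equality, so the lemma applies), pull that constant out to leave the surface area of the $(N-2)$-sphere of radius $\sqrt{\tfrac{N(N-1)}{2}(1-x^{2})}$, and identify $\ln C_{\beta}$ as $\lim_{N\to\infty}\tfrac{1}{N}\ln$ of the resulting $x$-independent constant. The only difference is cosmetic and lies in the last step: you expand $\sum_{v}v\ln v$ and $\prod_{j}\Gamma(1+\tfrac{j\beta}{2})$ directly via Glaisher--Kinkelin and Stirling and check that the super-linear terms cancel (they do, and the surviving linear terms assemble into $\ln C_{\beta}$ as you describe), whereas the paper evaluates the same limit by Stolz's rule, which replaces those sums by single-term differences and thus avoids the summation asymptotics altogether.
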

\begin{proof}
From $(1.14)$ and $(2.1)$, we see that
\begin{equation}
\rho_{\beta FTE_{N}}(x_{1})\leq \frac{1}{Z_{\beta
FTE_{N}}}2^{-\frac{\beta N(N-1)}{4}}\big(\prod_{v=1}^{N}e^{v\ln
v}\big)^{\frac{\beta}{2}}\int_{x_{2}^{2}+\cdots+x_{N}^{2}=\frac{N(N-1)}{2}-x_{1}^{2}}d\,\sigma_{N-1},
\end{equation}
where $\sigma_{N-1}$ denotes $N-2$ dimensional spherical measure. By
the formula for surface area of the sphere, a direct calculation
shows that
\begin{align}
&\rho_{\beta FTE_{N}}(x_{1})\leq \frac{1}{Z_{\beta
FTE_{N}}}2^{-\frac{\beta N(N-1)}{4}}\big(\prod_{v=1}^{N}e^{v\ln
v}\big)^{\frac{\beta}{2}}\frac{2 \pi
^{\frac{N-1}{2}}}{\Gamma(\frac{N-1}{2})}\big(\frac{N(N-1)}{2}-\frac{N(N-1)}{2}x^{2}\big)^{\frac{N-2}{2}}\nonumber\\
&=\frac{2^{-\frac{\beta N(N-1)}{4}}\big(\prod_{v=1}^{N}e^{v\ln
v}\big)^{\frac{\beta}{2}}2\pi
^{\frac{N-1}{2}}\big(\frac{N(N-1)}{2}-\frac{N(N-1)}{2}x^{2}\big)^{\frac{N-2}{2}}\Gamma(\frac{N_{\beta}}{2})}{\Gamma(\frac{N-1}{2})(\frac{N(N-1)}{2})^{\frac{N_{\beta}-1}{2}}(2\pi)^{\frac{N}{2}}2^{-\frac{N_{\beta}}{2}+1}
}\prod_{j=1}^{N}\frac{\Gamma(1+\frac{\beta}{2})}{\Gamma(1+\frac{j\beta}{2})}\nonumber\\
&=\frac{(1-x^{2})^{\frac{N-2}{2}}\,e^{\frac{\beta}{2}\sum_{v=1}^{N}v\ln
v}}
{\sqrt{\pi}{\Gamma(\frac{N-1}{2})}}\big(\frac{N(N-1)}{2}\big)^{\frac{N-2}{2}}\Big(\frac{\Gamma(\frac{N_{\beta}}{2})}{(\frac{N(N-1)}{2})^{\frac{N_{\beta}-1}{2}}}\Big)
\prod_{j=1}^{N}\frac{\Gamma(1+\frac{\beta}{2})}{\Gamma(1+\frac{j\beta}{2})}\\
&\triangleq (1-x^{2})^{\frac{N-2}{2}} g_{N\beta}.\end{align} It is
sufficient to consider $\sqrt[N]{g_{N\beta}}$ as $N\rightarrow
\infty$. Using
 Stirling's formula  for the gamma function,
\begin{equation}
\label{gamma}
\Gamma(x)=(2\pi)^{1/2}e^{-x}x^{x-\frac{1}{2}}(1+O(\frac{1}{x}))
\end{equation}
for the large $x$. For the large $N$,
\begin{align}
\Gamma(\frac{N-1}{2})&=(2\pi)^{1/2}e^{-\frac{N-1}{2}}\,(\frac{N-1}{2})^{\frac{N}{2}-1}(1+O(\frac{1}{N})),\\
\Gamma(\frac{N_{\beta}}{2})&=(2\pi)^{1/2}e^{-N_{\beta}/2}\,(\frac{N_{\beta}}{2})^{\frac{N_{\beta}-1}{2}}(1+O(\frac{1}{N})).
\end{align}
Note that
$$\Big(\frac{N_{\beta}}{N(N-1)}\Big)^{\frac{N_{\beta}-1}{2}}=\big(\frac{\beta}{2}\big)^{\frac{N_{\beta}-1}{2}}\,
\Big(1+\frac{2}{\beta (N-1)}\Big)^{\frac{N_{\beta}-1}{2}},$$
 thus $g_{N\beta}$ can be rewritten as
\begin{equation}
 g_{N\beta}=\frac{1}{\sqrt{\pi}}\, e^{\frac{N-1}{2}} \big(\Gamma(1+\frac{\beta}{2})\big)^{N}
\Big(1+\frac{2}{\beta
 (N-1)}\Big)^{\frac{N_{\beta}-1}{2}}(1+O(\frac{1}{N}))\,\tilde{g}_{N\beta}
\end{equation}
where
$$\tilde{g}_{N\beta}=\frac{\exp(\frac{\beta}{2}\sum_{v=1}^{N}v\ln v)}{\prod_{j=1}^{N}\Gamma(1+\frac{j\beta}{2})
}\, N^{\frac{N-2}{2}} e^{-\frac{N_{\beta}}{2}}
\,\big(\frac{\beta}{2}\big)^{\frac{N_{\beta}-1}{2}}.
$$
Observe that
\begin{align}
\lim_{N\rightarrow \infty}\left(\frac{1}{\sqrt{\pi}}\,
e^{\frac{N-1}{2}} \big(\Gamma(1+\frac{\beta}{2})\big)^{N}
\Big(1+\frac{2}{\beta
 (N-1)}\Big)^{\frac{N_{\beta}-1}{2}}(1+O(\frac{1}{N}))\right)^{1/N}
&=e\,\Gamma(1+\frac{\beta}{2}).
\end{align}
On the other hand, using Stolz's rule,
\begin{align}
&\lim_{N\rightarrow\infty}\frac{1}{N}\ln \tilde{g}_{N\beta}\nonumber\\
&=\lim_{N\rightarrow\infty}\frac{1}{N}\left(\frac{\beta}{2}\sum_{v=1}^{N}v\ln
v+\frac{N-2}{2}\ln N
-\frac{N_{\beta}}{2}-\sum_{j=1}^{N}\ln\Gamma(1+\frac{j\beta}{2})+\frac{N_{\beta}-1}{2}\ln(\frac{\beta}{2})\right)\nonumber\\
&=\lim_{N\rightarrow\infty}\left(\frac{\beta}{2}N\ln
N+\frac{1}{2}\ln N-\frac{N-3}{2}\ln (1-\frac{1}{N})
-\ln\Gamma(1+\frac{N\beta}{2})+\frac{1}{2}(\ln\frac{\beta}{2}-1)(1-\beta+\beta
N)\right)\nonumber\\
&=-\ln\sqrt{2\pi}+\frac{\beta}{2}-\frac{\beta}{2}\ln(\frac{\beta}{2}).
\end{align}
In the above calculation, we make use of the following asymptotic
expansion:
\begin{equation}
\ln\Gamma(1+\frac{N}{2}\beta)=\ln\sqrt{2\pi}-\frac{N}{2}\beta+(\frac{N}{2}\beta+\frac{1}{2})\ln(\frac{N}{2}\beta)
+O(\frac{1}{N}).
\end{equation}

Combining (2.11), (2.12) and (2.6), this completes the proof of
Proposition 5. 
\end{proof}

\section{Proof of Theorem \ref{theorem:global}}
To prove Theorem \ref{theorem:global}, we will first prove the
following Lemma \ref{lemma:Globa almost equivalent}, which means
that the level density of $\beta$-HE defined by (\ref{density}) and
that of fixed trace $\beta$-HE given by (\ref{densityforfixedtrace})
are \emph{almost equivalent}. Our arguments depend on the following
integral equation
 \begin{equation}
 \label{integral equation}
  \rho_{\beta
 HE_{N}}(x_{1})=\frac{1}{C_{N\beta}}\int_{|x_{1}|}^{+\infty}e^{-r^{2}/2}r^{N_{\beta}-2}\rho_{\beta
 FTE_{N},1}(\frac{x_{1}}{r})dr,
 \end{equation}
obtained in \cite{LD1,DL} where
$C_{N\beta}=\Gamma(N_{\beta}/2)2^{N_{\beta}/2-1}$. Here $\rho_{\beta
FTE_{N},1}(x_{1})$ denoting the level density of fixed trace
$\beta$-HE whose strength is 1, is defined by
\begin{equation}
\label{definition: the level density: strength 1} \rho_{\beta
FTE_{N},1}(x_{1})=\frac{1}{Z_{\beta FTE_{N},1}}
\int_{\mathbb{R}^{N-1}} \delta (\sum_{i=1}^{N}x_{i}^{2}-1)
\prod_{1\leq j<k \leq N} |x_{j}-x_{k}|^{\beta}dx_{2}\ldots d x_{N}
\end{equation}
where the partition function 
\begin{equation}
Z_{\beta
FTE_{N},1}=\frac{(2\pi)^{\frac{N}{2}}2^{-\frac{N_{\beta}}{2}+1}}{\Gamma(\frac{N_{\beta}}{2})}\prod_{j=1}^{N}\frac{\Gamma(1+\frac{j\beta}{2})}{\Gamma(1+\frac{\beta}{2})}.
\end{equation}
A direct calculation shows
\begin{equation}
\label{relation} \rho_{\beta
FTE_{N},1}(x)=\sqrt{\frac{N(N-1)}{2}}\rho_{\beta
FTE_{N}}(\sqrt{\frac{N(N-1)}{2}}x).
\end{equation}
It follows from with Proposition \ref{proposition 1} that
\begin{equation}
\label{level density estimate: strength 1} \rho_{\beta
FTE_{N},1}(x)\leq
\sqrt{\frac{N(N-1)}{2}}e^{W_{N\beta}N}(1-x^{2})^{\frac{N-2}{2}}
\end{equation}
for any $-1\leq x\leq 1$.  Now we are ready to state the following
\emph{almost equivalent} lemma about the two ensembles for all
$\beta>0$, which has been obtained in \cite{xuda} for $\beta=2$
without rigorous arguments.
\begin{lemma}
\label{lemma:Globa almost equivalent} Let $-1\leq x\leq1$ be fixed.
For the level density of fixed trace $\beta$-HE defined by
(\ref{density}) and that of $\beta$-HE given by
(\ref{densityforfixedtrace}), we have
\begin{equation}
\rho_{\beta
HE_{N}}(\sqrt{2N\beta}x)=\big(\frac{1}{\sqrt{\beta}}+O(\frac{1}{N})\big)\,\rho_{\beta
FTE_{N}}\big(\sqrt{2N}x(1+O(\alpha_{N}))\big)+O(e^{-\beta
N^{2(1-\theta)}(1+o(1))})
\end{equation}
for large $N$ where $\alpha_{N} =\frac{1}{N^{\theta}}$, $0<\theta<
0.5$.
\end{lemma}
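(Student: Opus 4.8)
The plan is to run a Laplace (saddle-point) analysis on the integral equation (\ref{integral equation}). Substituting $x_{1}=\sqrt{2N\beta}\,x$ and using that $\rho_{\beta FTE_{N},1}$ is supported on $[-1,1]$, I would write
\[
\rho_{\beta HE_{N}}(\sqrt{2N\beta}\,x) = \frac{1}{C_{N\beta}}\int_{\sqrt{2N\beta}|x|}^{+\infty} w(r)\,\rho_{\beta FTE_{N},1}\!\Big(\frac{\sqrt{2N\beta}\,x}{r}\Big)\,dr, \qquad w(r) := e^{-r^{2}/2}\,r^{N_{\beta}-2}.
\]
The weight $w$ has a sharp maximum at $r_{\ast}=\sqrt{N_{\beta}-2}$; since $(\ln w)''(r_{\ast})=-2$ and $N_{\beta}-2\sim\beta N^{2}/2$, the mass of $w$ concentrates in a neighbourhood of $r_{\ast}\sim\sqrt{\beta/2}\,N$. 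The whole argument rests on the observation that at $r=r_{\ast}$ the argument of the fixed-trace density is $\frac{\sqrt{2N\beta}\,x}{r_{\ast}}$, which after rescaling through (\ref{relation}) becomes an evaluation of $\rho_{\beta FTE_{N}}$ near $\sqrt{2N}\,x$.

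First I would split the integral at the window $W=\{r:|r-r_{\ast}|\le\sqrt{\beta}\,N^{1-\theta}\}$. On $W$ one has $\frac{\sqrt{2N\beta}\,x}{r}=\frac{\sqrt{2N\beta}\,x}{r_{\ast}}\big(1+O(\alpha_{N})\big)$, because $N^{1-\theta}/r_{\ast}=O(N^{-\theta})=O(\alpha_{N})$; by continuity of $\rho_{\beta FTE_{N},1}$ and the mean-value theorem for integrals, the weighted integral over $W$ equals $\rho_{\beta FTE_{N},1}$ evaluated at a single argument of this form, times $\frac{1}{C_{N\beta}}\int_{W}w$. Extending the latter integral to $(0,\infty)$ (at the price of the same exponentially small error handled below) and computing the resulting Gamma quotient gives $\frac{1}{C_{N\beta}}\int_{0}^{\infty}w(r)\,dr=2^{-1/2}\,\Gamma(\tfrac{N_{\beta}-1}{2})/\Gamma(\tfrac{N_{\beta}}{2})=N_{\beta}^{-1/2}(1+O(\tfrac1N))$. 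Feeding this into (\ref{relation}), and using the elementary asymptotics $\sqrt{N(N-1)/2}\,\sqrt{2N\beta}/r_{\ast}=\sqrt{2N}\,(1+O(\tfrac1N))$ together with $\sqrt{N(N-1)/2}\,N_{\beta}^{-1/2}=\beta^{-1/2}(1+O(\tfrac1N))$, produces precisely the leading factor $\tfrac{1}{\sqrt{\beta}}+O(\tfrac1N)$ and the argument $\sqrt{2N}\,x(1+O(\alpha_{N}))$.

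The main obstacle is the tail estimate on $W^{c}$. Here I would combine the Gaussian decay of the weight, $w(r)\le w(r_{\ast})\exp\!\big(-\beta N^{2(1-\theta)}(1+o(1))\big)$ for $|r-r_{\ast}|\ge\sqrt{\beta}\,N^{1-\theta}$ (from $\ln w(r)-\ln w(r_{\ast})=-(r-r_{\ast})^{2}+\cdots$, since $(r_{\ast})^{2}=N_{\beta}-2$), with the \emph{a priori} bound (\ref{level density estimate: strength 1}) on $\rho_{\beta FTE_{N},1}$, which grows only like $e^{W_{N\beta}N}=e^{O(N)}$. Because $\theta<1/2$ forces $2(1-\theta)>1$, the super-exponential decay of the weight dominates the merely exponential growth of the density bound, so the whole tail contributes $O\!\big(e^{-\beta N^{2(1-\theta)}(1+o(1))}\big)$, matching the stated error. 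One further point of care is the lower endpoint $r=\sqrt{2N\beta}|x|$, where the density argument reaches the edge $\pm1$ and the bound (\ref{level density estimate: strength 1}) degenerates; but this endpoint is of order $\sqrt{N}$, hence lies far below $r_{\ast}\sim N$ and deep inside the negligible tail, so it poses no difficulty. Assembling the window contribution with the tail bound then yields the claimed identity.
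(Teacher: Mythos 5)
Your proposal is correct and follows essentially the same route as the paper's own proof: the same integral equation (\ref{integral equation}), the same window-plus-tails decomposition around the maximum of $e^{-r^{2}/2}r^{N_{\beta}-2}$ with half-width $\sqrt{\beta}\,N^{1-\theta}$, tails controlled by the a priori bound of Proposition \ref{proposition 1} against the weight's decay, the mean value theorem for integrals on the window, the Gamma-quotient evaluation $\frac{1}{C_{N\beta}}\int_{0}^{\infty}e^{-r^{2}/2}r^{N_{\beta}-2}\,dr=\frac{\sqrt{2}}{\sqrt{\beta}N}\bigl(1+O(\tfrac{1}{N})\bigr)$, and the rescaling (\ref{relation}). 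The only cosmetic difference is that you quantify the tail decay by Taylor expansion of $\ln w$ about $r_{\ast}$ (where the globally valid bound $(\ln w)''\le -1$ also takes care of integrability of the upper tail, a point worth one explicit line), whereas the paper exploits monotonicity of the weight on each tail together with explicit Stirling estimates.
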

\begin{proof}
Dividing the right hand side of the integral equation
$(\ref{integral equation})$ into three parts, then
\begin{align}
\label{divide:global}
 \rho&_{\beta HE_{N}}(x_{1})\nonumber\\
&=\frac{1}{C_{N\beta}}(\int_{|x_{1}|}^{\sqrt{\beta}N(\frac{1}{\sqrt{2}}-\alpha_{N})}+\int_{\sqrt{\beta}N(\frac{1}{\sqrt{2}}+\alpha_{N})}^{+\infty}
+\int_{\sqrt{\beta}N(\frac{1}{\sqrt{2}}-\alpha_{N})}^{\sqrt{\beta}N(\frac{1}{\sqrt{2}}+\alpha_{N})})\,
e^{-r^{2}/2}r^{N_{\beta}-2}\,\rho_{\beta FTE_{N},1}(\frac{x_{1}}{r})dr\nonumber\\
&=I+II+III.
\end{align}
Next we will estimate $I$ and $II$ respectively. For large $N$, the
function $e^{-r^{2}/2}r^{N_{\beta}-2}$ attains its maximum at
$\sqrt{N_{\beta}-2}$ , which satisfying
\begin{equation}
\sqrt{N_{\beta}-2}>\sqrt{\beta}N(\frac{1}{\sqrt{2}}-\alpha_{N}).
\end{equation}
Together with $(\ref{gamma})$ and $(\ref{level density estimate:
strength 1})$, $I$ can be dominated by
\begin{align}
I&\leq\frac{1}{C_{N\beta}}\int_{|x_{1}|}^{\sqrt{\beta}N(\frac{1}{\sqrt{2}}-\alpha_{N})}e^{-r^{2}/2}r^{N_{\beta}-2}dr e^{W_{N\beta}N}\sqrt{\frac{N(N-1)}{2}}\nonumber\\
&\leq\frac{e^{-(\sqrt{\beta}N(\frac{1}{\sqrt{2}}-\alpha_{N}))^{2}/2}(\sqrt{\beta}N)^{N_{\beta}-2}(\frac{1}{\sqrt{2}}-\alpha_{N})^{N_{\beta}-2}}{\Gamma{(\frac{N_{\beta}}{2})}2^{\frac{N_{\beta}}{2}-1}}(\sqrt{\beta}N(\frac{1}{\sqrt{2}}-\alpha_{N})-|x_{1}|)e^{W_{N\beta}N}\sqrt{\frac{N(N-1)}{2}}\nonumber\\
&\leq\frac{e^{-(\sqrt{\beta}N(\frac{1}{\sqrt{2}}-\alpha_{N}))^{2}/2}(\sqrt{\beta}N)^{N_{\beta}-2}(\frac{1}{\sqrt{2}}-\alpha_{N})^{N_{\beta}-2}}{\sqrt{2\pi}e^{-\frac{N_{\beta}}{2}}(N_{\beta}/2)^{\frac{N_{\beta}-1}{2}}(1+O(1/N^{2}))2^{\frac{N_{\beta}}{2}-1}}(\sqrt{\beta}N(\frac{1}{\sqrt{2}}-\alpha_{N})-|x_{1}|)e^{W_{N\beta}N}\sqrt{\frac{N(N-1)}{2}}\nonumber\\
&\leq
C^{\prime}\frac{e^{-(\sqrt{\beta}N(\frac{1}{\sqrt{2}}-\alpha_{N}))^{2}/2}(\sqrt{\beta}N)^{N_{\beta}}(\frac{1}{\sqrt{2}}-\alpha_{N})^{N_{\beta}}}{e^{-N_{\beta}/2}N_{\beta}^{\frac{N_{\beta}}{2}}}N
e^{W_{N\beta}N}\label{3.9}
\end{align}
where we have used the fact that
\begin{equation}
\sqrt{\frac{N(N-1)}{2}}(\frac{1}{\sqrt{2}}-\alpha_{N})^{-2}(\sqrt{\beta}N)^{-2}\Big(\sqrt{\beta}N(\frac{1}{\sqrt{2}}-\alpha_{N})-|x_{1}|\Big)\,\frac{1}{2\sqrt{\pi}(N_{\beta})^{-1/2}}\leq
C^{\prime}N
\end{equation}
for large $N$. Here $C^{\prime}$ is a constant only depending on
$\beta$. We will estimate the right hand side of (\ref{3.9}).
Expanding
$$\ln (
N_{\beta}^{N_{\beta}/2})=\frac{N_{\beta}}{2}\big(\ln(\frac{\beta}{2}N^{2})+\ln(1+(\frac{2}{\beta}-1)\frac{1}{N})\big)$$
 and by a direct calculation, one obtains
\begin{equation}
\frac{(\sqrt{\beta}N)^{N_{\beta}}}{N_{\beta}^{N_{\beta}/2}}=\exp(\frac{N_{\beta}}{2}\ln
2-\frac{N}{2}+\frac{\beta N}{4}+O(1)).
\end{equation}
Again, expanding
$$\ln (\frac{1}{\sqrt{2}}-\alpha_{N})=-\ln \sqrt{2}-\sqrt{2}\alpha_{N}-\alpha_{N}^{2}+O(\alpha_{N}^{3}).$$
therefore $I$ can be dominated by
\begin{multline*}
I\leq
C^{\prime}\exp\Big(-\frac{\beta}{2}(\frac{1}{\sqrt{2}}-\alpha_{N})^{2}N^{2}-
\big(\ln
\sqrt{2}+\sqrt{2}\alpha_{N}+\alpha_{N}^{2}+O(\alpha_{N}^{3})\big)N_{\beta}+\frac{1}{2}N_{\beta}+\frac{\ln
2}{2}N_{\beta}\\-\frac{N}{2}+\frac{\beta N}{4}+O(1) +W_{N\beta}N+\ln
N\Big)\end{multline*}\begin{align} \label{I result}
&=C^{\prime}\exp\Big(-\frac{\beta}{2}(\frac{1}{\sqrt{2}}-\alpha_{N})^{2}N^{2}-
\frac{\beta}{2}\big(\sqrt{2}\alpha_{N}+\alpha_{N}^{2}+O(\alpha_{N}^{3})\big)N^{2}+\frac{\beta}{4}N^{2}+W_{N\beta}N+\ln N+O(1)\Big)\nonumber\\
&=C^{\prime}\exp\big(-\beta\alpha_{N}^{2}N^{2}+\beta
N^{2}O(\alpha_{N}^{3})+W_{N\beta}N+\ln N+O(1)\big)\nonumber\\
&=C^{\prime}\exp\big(-\beta
N^{2-2\theta}(1+O(N^{-\theta})+O(N^{2\theta-2})+O(N^{2\theta-1}))\big)=O(e^{-\beta
N^{2(1-\theta)}(1+o(1))}).
\end{align}
Here  we should take $\theta\in (0,0.5)$. For the convenience of our
argument, write
$$O\Big(\exp\big(-\beta
N^{2-2\theta}(1+O(N^{-\theta})+O(N^{2-2\theta})+O(N^{2\theta-1}))\big)\Big)\triangleq\Xi_{N}.$$
Similarly , we can estimate $II$. For large $N$, the function
$e^{-r^{2}/2}r^{N_{\beta}}$ attains its maximum at
$\sqrt{N_{\beta}}$ with the condition
\begin{equation}
\sqrt{N_{\beta}}<\sqrt{\beta}N(\frac{1}{\sqrt{2}}+\alpha_{N}).
\end{equation}
This shows that $II$ can be dominated by
\begin{align}
\label{II estimate}
II&<\frac{1}{C_{N\beta}}\int_{\sqrt{\beta}N(\frac{1}{\sqrt{2}}+\alpha_{N})}^{+\infty}e^{-r^{2}/2}r^{N_{\beta}}r^{-2}dre^{W_{N\beta}N}\sqrt{\frac{N(N-1)}{2}}\nonumber\\
&\leq\frac{e^{-(\sqrt{\beta}N(\frac{1}{\sqrt{2}}+\alpha_{N}))^{2}/2}(\sqrt{\beta}N)^{N_{\beta}}(\frac{1}{\sqrt{2}}+\alpha_{N})^{N_{\beta}}}{\Gamma{(\frac{N_{\beta}}{2})}2^{\frac{N_{\beta}}{2}-1}}
\int_{\sqrt{\beta}N(\frac{1}{\sqrt{2}}+\alpha_{N})}^{+\infty}r^{-2}dr e^{W_{N\beta}N}\sqrt{\frac{N(N-1)}{2}}\nonumber\\
&\leq C^{\prime\prime}\exp\big(-\beta
N^{2-2\theta}(1+O(N^{2\theta-2})+O(N^{-\theta})+O(N^{2\theta-1}))\big)=\Xi_{N}.
\end{align}
Here $C^{\prime\prime}$ is a constant only depending on $\beta$. By
(\ref{I result}) and (\ref{II estimate}), the identity
(\ref{divide:global}) can be reduced to
\begin{equation}
\rho_{\beta
HE_{N}}(x_{1})=\frac{1}{C_{N\beta}}\int_{\sqrt{\beta}N(\frac{1}{\sqrt{2}}-\alpha_{N})}^{\sqrt{\beta}N(\frac{1}{\sqrt{2}}+\alpha_{N})}e^{-r^{2}/2}r^{N_{\beta}-2}\rho_{\beta
FTE_{N},1}(\frac{x_{1}}{r})dr+\Xi_{N}.
\end{equation}
Using the intermediate value theorem of integral,
\begin{equation}
\label{zhong zhi ding li hou } \rho_{\beta
HE_{N}}(x_{1})=\rho_{\beta FTE_{N},1}(\frac{x_{1}}{\xi_{N}(x_{1})})
\frac{1}{C_{N\beta}}\int_{\sqrt{\beta}N(\frac{1}{\sqrt{2}}-\alpha_{N})}^{\sqrt{\beta}N
(\frac{1}{\sqrt{2}}+\alpha_{N})}e^{-r^{2}/2}r^{N_{\beta}-2}dr+\Xi_{N}
\end{equation}
where $ \sqrt{\beta}N(1/\sqrt{2}-\alpha_{N})\leq\xi_{N}(x_{1})\leq
\sqrt{\beta}N(1/\sqrt{2}+\alpha_{N})$. If we repeat the procedure of
obtaining the estimate of $I$ and $II$, then
\begin{align}
\frac{1}{C_{N\beta}}\int_{0}^{\sqrt{\beta}N(\frac{1}{\sqrt{2}}-\alpha_{N})}e^{-\frac{r^{2}}{2}}r^{N_{\beta}-2}dr=O(e^{-\beta
N^{2(1-\theta)}(1+o(1))}),\label{1}\\
\frac{1}{C_{N\beta}}\int_{\sqrt{\beta}N(\frac{1}{\sqrt{2}}+\alpha_{N})}^{+\infty}e^{-\frac{r^{2}}{2}}r^{N_{\beta}-2}dr=O(e^{-\beta
N^{2(1-\theta)}(1+o(1))})\label{2}
\end{align}
for $0<\theta<1$.  Actually, the process of the argument of
(\ref{3.9})
 tells us that the term
$\sqrt{(N(N-1))/2}e^{W_{N\beta}}$ will disappear in that of
obtaining (3.17). Hence the left hand side of (\ref{1}) can be
dominated by
\begin{align}
\label{3.19}C_{1}\frac{e^{-(\sqrt{\beta}N(\frac{1}{\sqrt{2}}-\alpha_{N}))^{2}/2}(\sqrt{\beta}N)^{N_{\beta}}(\frac{1}{\sqrt{2}}-\alpha_{N})^{N_{\beta}}}{e^{-N_{\beta}/2}N_{\beta}^{\frac{N_{\beta}}{2}}}
\end{align}
where
\begin{equation}
C_{1}=(\frac{1}{\sqrt{2}}-\alpha_{N})^{-2}(\sqrt{\beta}N)^{-2}\Big(\sqrt{\beta}N(\frac{1}{\sqrt{2}}-\alpha_{N})\Big)\,\frac{1}{2\sqrt{\pi}(N_{\beta})^{-1/2}}.
\end{equation}
Here it is easy to see that $C_{1}$ is bounded for large $N$.
Repeated arguments similar with (\ref{I result}) show that
(\ref{3.19}) can be dominated by $\exp\big(-\beta
N^{2-2\theta}(1+O(N^{-\theta})+O(N^{2\theta-2}))\big)$. The same
method can be used to obtain (\ref{2}). Hence, for $0<\theta<1$,
\begin{align}
\label{media integral}
&\frac{1}{C_{N\beta}}\int_{\sqrt{\beta}N(\frac{1}{\sqrt{2}}-\alpha_{N})}^{\sqrt{\beta}N(\frac{1}{\sqrt{2}}+\alpha_{N})}e^{-\frac{r^{2}}{2}}r^{N_{\beta}-2}dr
=\frac{1}{C_{N\beta}}\Big(\int_{0}^{+\infty}-\int_{0}^{\sqrt{\beta}N(\frac{1}{\sqrt{2}}-\alpha_{N})}-\int_{\sqrt{\beta}N(\frac{1}{\sqrt{2}}+\alpha_{N})}^{+\infty}\Big)\,
e^{-\frac{r^{2}}{2}}r^{N_{\beta}-2}dr\nonumber\\
&=\frac{1}{C_{N\beta}}\int_{0}^{+\infty}e^{-\frac{r^{2}}{2}}r^{N_{\beta}-2}dr+O(e^{-\beta
N^{2(1-\theta)}(1+o(1))})\nonumber\\
&=\frac{\Gamma(\frac{N_{\beta}-1}{2})}{\sqrt{2}\Gamma(\frac{N_{\beta}}{2})}+O(e^{-\beta
N^{2(1-\theta)}(1+o(1))})=\frac{\sqrt{2}}{N\sqrt{\beta}}+O(\frac{1}{N^{2}}).
\end{align}
It is worth emphasizing that the range of $\theta$ in Eqs. (\ref{1})
and (\ref{2}) plays a vital role in the proof of Theorem 2.

Hence Eq.(\ref{zhong zhi ding li hou }) can be reduced to
\begin{equation}
\rho_{\beta HE_{N}}(x_{1})=\rho_{\beta
FTE_{N},1}(\frac{x_{1}}{\xi_{N}(x_{1})})(\frac{\sqrt{2}}{N\sqrt{\beta}}+O(\frac{1}{N^{2}}))
+\Xi_{N}.\label{3.22}
\end{equation}
If we make the change of variables $x_{1}=\sqrt{2N\beta}x$, the
relation (\ref{relation}) implies that
\begin{align}
&\rho_{\beta HE_{N}}(\sqrt{2N\beta}x)=\rho_{\beta
FTE_{N},1}\left(\frac{\sqrt{2N\beta}x}{\xi_{N}(x)}\right)\left(\frac{\sqrt{2}}{N\sqrt{\beta}}+O(\frac{1}{N^{2}})\right)+\Xi_{N}.\nonumber\\
&=\rho_{\beta
FTE_{N}}\left(\sqrt{\frac{N(N-1)}{2}}\frac{\sqrt{2N\beta}x}{\xi_{N}(x)}\right)\left(\frac{\sqrt{2}}{N\sqrt{\beta}}+O(\frac{1}{N^{2}})\right)\sqrt{\frac{N(N-1)}{2}}+\Xi_{N}\nonumber\\
&=\big(\frac{1}{\sqrt{\beta}}+O(\frac{1}{N})\big)\,\rho_{\beta
FTE_{N}}\big(\sqrt{2N}x(1+O(\alpha_{N}))\big)+ \Xi_{N}.
\end{align}
Here we make use of the fact that for the large $N$,
\begin{align}
&\sqrt{\frac{N(N-1)}{2}}\frac{\sqrt{2N\beta}x}{\xi_{N}(x)}=(1+O(\alpha_{N}))\sqrt{2N}x,\nonumber\\
&\left(\frac{\sqrt{2}}{N\sqrt{\beta}}+O(\frac{1}{N^{2}})\right)\sqrt{\frac{N(N-1)}{2}}=\frac{1}{\sqrt{\beta}}+O(\frac{1}{N}).\nonumber
\end{align}
Note that when $0< \theta <0.5$ $\Xi_{N}$ can be rewritten  by
$$\Xi_{N}=O(e^{-\beta
N^{2(1-\theta)}(1+o(1))}),$$ thus we conclude the lemma.
\end{proof}

 Now let us turn to the proof of Theorem 1.
\begin{proof}[Proof of Theorem 1]
Let $f(x)\in C_{c}(\mathbb{R})$. Since $f$ is bounded, using Lemma
\ref{lemma:Globa almost equivalent}, for fixed $0<\theta <0.5$ we
have
\begin{align*}
&\int_{\mathbb{R}}f(x)\sqrt{2N\beta}\rho_{\beta
HE_{N}}(\sqrt{2N\beta}x)dx \nonumber\\
&=\int_{\mathbb{R}}f(x)\sqrt{2N\beta}(\frac{1}{\sqrt{\beta}}+O(\frac{1}{N}))\rho_{\beta
FTE_{N}}(\sqrt{2N}x(1+O(N^{-\theta})))dx+O(e^{-\beta
N^{2(1-\theta)}(1+o(1))})\nonumber\\
&=(1+O(N^{-\theta}))\int_{\mathbb{R}}f(y(1+O(N^{-\theta})))\sqrt{2N}\rho_{\beta
FTE_{N}}(\sqrt{2N}\,y)d\,y+O(e^{-\beta N^{2(1-\theta)}(1+o(1))}).
\end{align*}
The function $f(x)\in C_{c}(\mathbb{R})$ means that for any
$\epsilon>0$, there exists some $\delta(\epsilon)>0$ such that
$|f(x)-f(y)|<\epsilon$ if $|x-y|<\delta$. Hence, there exists
$N_{1}$ such that for any $y\in supp(f)$,
$|y(1+O(N^{-\theta}))-y|<\delta$ for $N>N_{1}$, then
$|f(y(1+O(N^{-\theta})))-1)-f(y)|<\epsilon.$ This shows that
\begin{align*}
&\Big|(1+O(N^{-\theta}))\int_{\mathbb{R}}[f(y(1+O(N^{-\theta})))-f(y)]\sqrt{2N}\rho_{\beta
FTE_{N}}(\sqrt{2N}y)dy\Big|\nonumber\\
&\leq 2\epsilon\int_{\mathbb{R}}\sqrt{2N}\rho_{\beta
FTE_{N}}(\sqrt{2N}y)dy=2\epsilon.\nonumber
\end{align*}
Therefore,
\begin{align*}
&\int_{\mathbb{R}}f(x)\sqrt{2N\beta}\rho_{\beta
HE_{N}}(\sqrt{2N\beta}x)dx \nonumber\\
&=(1+O(N^{-\theta}))\int_{\mathbb{R}}f(y)\sqrt{2N}\rho_{\beta
FTE_{N}}(\sqrt{2N}\,y)d\,y+O(e^{-\beta N^{2(1-\theta)}(1+o(1))})+2
\epsilon.
\end{align*}
Since by (1.5)
$$ \lim_{N\rightarrow\infty}\sqrt{2\beta
N}\large{\rho}_{\beta HE_{N}}(\sqrt{2\beta
N}x)=\rho_{\mathrm{W}}(x),$$ this completes this proof.
\end{proof}

\section{Proof of Theorem \ref{theorem edge}}
The classic result \cite{F1} claims that the order of scaling at the
spectrum edge is $O(N^{-2/3})$. It follows from Lemma
\ref{lemma:Globa almost equivalent} that if
$\alpha_{N}=N^{-\theta}$, $2/3<\theta<1$, then
\begin{equation}
(1+\frac{x}{2N^{2/3}})(1+O(\alpha_{N}))=1+\frac{x}{2N^{2/3}}+O(N^{-\theta}).\nonumber
\end{equation}
The term $O(N^{-\theta})$, comparing with $O(N^{-2/3})$, is a small
perturbation. Therefore the edge scaling limit of fixed trace
$\beta$-HE could be expected. But Lemma \ref{lemma:Globa almost
equivalent} is established for $0<\theta<0.5$. The main difficulty
results from Proposition \ref{proposition 1}. In order to avoid it,
the edge scaling limit will be proved in the weak sense.
Note that the asymptotic results (\ref{1}) and (\ref{2}) will be
frequently used for any $2/3<\theta<1$ in the subsequent proof. We
now turn to the proof of Theorem \ref{theorem edge}.

\begin{proof}[Proof of Theorem \ref{theorem edge}]
Following the similar arguments of Theorem \ref{theorem:global}, by
the basic relation (\ref{integral equation}) between two ensembles,
we have
\begin{align}
\label{divide integral}
&\frac{\sqrt{\beta}N^{5/6}}{\sqrt{2}}\int_{\mathbb{R}}f(x)\rho_{\beta
HE_{N}}\left(\sqrt{2N\beta}(1+\frac{x}{2N^{2/3}})\right)\,dx\nonumber\\
&=\frac{1}{C_{N\beta}}\frac{\sqrt{\beta}N^{5/6}}{\sqrt{2}}\int_{\mathbb{R}}f(x)\int_{\sqrt{2\beta
N}(1+\frac{x}{2N^{2/3}})}^{\infty}e^{-r^{2}/2}r^{N_{\beta}-2}\rho_{\beta
FTE,1}\Big(\frac{\sqrt{2N\beta}(1+\frac{x}{2N^{2/3}})}{r}\Big)drdx\nonumber\\
&=\frac{1}{C_{N\beta}}\frac{\sqrt{\beta}N^{5/6}}{\sqrt{2}}\int_{\mathbb{R}}f(x)\Big(\int_{\sqrt{2\beta
N}(1+\frac{x}{2N^{2/3}})}^{\sqrt{\beta}N(1/\sqrt{2}-\alpha_{N})}
+\int_{\sqrt{\beta}N(1/\sqrt{2}-\alpha_{N})}^{\sqrt{\beta}N(1/\sqrt{2}+\alpha_{N})}+\int_{\sqrt{\beta}N(1/\sqrt{2}+\alpha_{N})}^{\infty}
\Big)\nonumber\\ &\times e^{-r^{2}/2}r^{N_{\beta}-2}\rho_{\beta
FTE,1}\Big(\frac{\sqrt{2N\beta}(1+\frac{x}{2N^{2/3}})}{r}\Big)drdx\nonumber\\
&=I_{1}+I_{2}+I_{3}.\end{align} The first step is to estimate
$I_{1}$. Making the change of variables
\begin{equation}
\label{variable substitution} x=2N^{2/3}(r(1+\frac{y}{2N^{2/3}})-1)
\end{equation}and assuming $|f(x)|\leq M$, $I_{1}$ can be dominated by
\begin{multline*}
I_{1}=\frac{1}{C_{N\beta}}\frac{\sqrt{\beta}N^{5/6}}{\sqrt{2}}\int_{\mathbb{R}}\int_{0}^{\sqrt{\beta}N(1/\sqrt{2}-\alpha_{N})}
1_{\sqrt{2\beta N}r(1+y/2N^{2/3}) \leq r\leq
\sqrt{\beta}N(1/\sqrt{2}-\alpha_{N})}(r) \\ \times
f(2N^{2/3}(r(1+\frac{y}{2N^{2/3}})-1))e^{-r^{2}/2}r^{N_{\beta}-1}\rho_{\beta
FTE,1}\Big(\sqrt{2N\beta}(1+\frac{y}{2N^{2/3}})\Big)dr\,dy
\end{multline*}\begin{align}
\label{I1: result} &\leq M
\frac{1}{C_{N\beta}}\frac{\sqrt{\beta}N^{5/6}}{\sqrt{2}}\int_{\mathbb{R}}\int_{0}^{\sqrt{\beta}N(1/\sqrt{2}-\alpha_{N})}
e^{-r^{2}/2}r^{N_{\beta}-1}\rho_{\beta
FTE,1}\Big(\sqrt{2N\beta}(1+\frac{y}{2N^{2/3}})\Big)drdy\nonumber\\
&=M\frac{1}{C_{N\beta}}\int_{0}^{\sqrt{\beta}N(1/\sqrt{2}-\alpha_{N})}e^{-r^{2}/2}r^{N_{\beta}-1}dr
\frac{\sqrt{\beta}N^{5/6}}{\sqrt{2}}\int_{\mathbb{R}}\rho_{\beta
FTE,1}\Big(\sqrt{2N\beta}(1+\frac{y}{2N^{2/3}})\Big)dy\nonumber\\
&=M\,O(e^{-\beta
N^{2(1-\theta)}(1+o(1))})\frac{\sqrt{\beta}N^{5/6}}{\sqrt{2}}\,\frac{\sqrt{2}N^{1/6}}{\sqrt{\beta}}\int_{\mathbb{R}}\rho_{\beta
FTE,1}(t)dt\nonumber\\
&=O(N\, e^{-\beta N^{2(1-\theta)}(1+o(1))}).
\end{align}
Here we apply (\ref{1}) and $\int_{\mathbb{R}}\rho_{\beta
FTE,1}(y)dy=1$ to obtain the above result. Similarly, making the
same variable substitution as $I_{1}$,  $I_{3}$ can be dominated by
\begin{multline*}
I_{3}=\frac{1}{C_{N\beta}}\frac{\sqrt{\beta}N^{5/6}}{\sqrt{2}}\int_{\mathbb{R}}\int_{\sqrt{\beta}N(1/\sqrt{2}+\alpha_{N})}^{\infty}
f(2N^{2/3}(r(1+\frac{y}{2N^{2/3}})-1))\\ \times
e^{-r^{2}/2}r^{N_{\beta}-1}\rho_{\beta
FTE,1}\Big(\sqrt{2N\beta}(1+\frac{y}{2N^{2/3}})\Big)dr\,dx
\end{multline*}\begin{align}
\label{I3:result} &\leq M
\frac{1}{C_{N\beta}}\frac{\sqrt{\beta}N^{5/6}}{\sqrt{2}}\int_{\mathbb{R}}\int_{\sqrt{\beta}N(1/\sqrt{2}+\alpha_{N})}^{\infty}
e^{-r^{2}/2}r^{N_{\beta}-1}\rho_{\beta
FTE,1}\Big(\sqrt{2N\beta}(1+\frac{y}{2N^{2/3}})\Big)drdx\nonumber\\
&=M\frac{1}{C_{N\beta}}\int_{\sqrt{\beta}N(1/\sqrt{2}+\alpha_{N})}^{\infty}e^{-r^{2}/2}r^{N_{\beta}-1}dr
\frac{\sqrt{\beta}N^{5/6}}{\sqrt{2}}\int_{\mathbb{R}}\rho_{\beta
FTE,1}\Big(\sqrt{2N\beta}(1+\frac{y}{2N^{2/3}})\Big)dy\nonumber\\
&=O(N\,e^{-\beta N^{2(1-\theta)}(1+o(1))}).
\end{align}
The key step is to deal with $I_{2}$. Applying (\ref{relation}) to
$I_{2}$, we find
\begin{align}
I_{2}=\frac{1}{C_{N\beta}\,a_{N\beta}}\frac{N^{5/6}}
{\sqrt{2}}\int_{\mathbb{R}}\int_{\sqrt{\beta}N(\frac{1}{\sqrt{2}}-\alpha_{N})}^{\sqrt{\beta}N(\frac{1}{\sqrt{2}}+\alpha_{N})}
f(x)e^{-r^{2}/2}r^{N_{\beta}-2}\rho_{\beta
FTE_{N}}\Big(\frac{\sqrt{2N}(1+\frac{x}{2N^{2/3}})}{a_{N\beta}r}\Big)drdx
\end{align}
where
\begin{equation}
\label{anbeta} a_{N\beta}=\sqrt{\frac{2}{\beta N(N-1)}}\,.
\end{equation}

Making the change of variables
$x=2N^{2/3}(a_{N\beta}r(1+y/(2N^{2/3}))-1)$,
\begin{multline}
\label{I2: bian xian hou de}
I_{2}=\frac{1}{C_{N\beta}}\frac{N^{5/6}}
{\sqrt{2}}\int_{\mathbb{R}}\int_{\sqrt{\beta}N(\frac{1}{\sqrt{2}}-\alpha_{N})}^{\sqrt{\beta}N(\frac{1}{\sqrt{2}}+\alpha_{N})}
f(2N^{2/3}(a_{N\beta}r(1+\frac{y}{2N^{2/3}})-1))
\\e^{-r^{2}/2}r^{N_{\beta}-1}\rho_{\beta
FTE_{N}}\Big(\sqrt{2N}(1+\frac{y}{2N^{2/3}})\Big)dr\,dy.
\end{multline}
Basing on (\ref{media integral}), it is not difficult to see that
for large $N$,
\begin{align}
\label{result:edge} &\frac{1}{C_{N\beta}}\frac{N^{5/6}}
{\sqrt{2}}\int_{\mathbb{R}}\int_{\sqrt{\beta}N(\frac{1}{\sqrt{2}}-\alpha_{N})}^{\sqrt{\beta}N(\frac{1}{\sqrt{2}}+\alpha_{N})}
f(y)e^{-r^{2}/2}r^{N_{\beta}-1}\rho_{\beta
FTE_{N}}\Big(\sqrt{2N}(1+\frac{y}{2N^{2/3}})\Big)drdy\\
&=\frac{1}{C_{N\beta}}\int_{\sqrt{\beta}N(\frac{1}{\sqrt{2}}-\alpha_{N})}^{\sqrt{\beta}N(\frac{1}{\sqrt{2}}+\alpha_{N})}
e^{-r^{2}/2}r^{N_{\beta}-1}dr\,
\int_{\mathbb{R}}f(y)\frac{N^{5/6}}{\sqrt{2}}\rho_{\beta
FTE_{N}}\left(\sqrt{2N}(1+\frac{y}{2N^{2/3}})\right)dy\nonumber\\
&=(1+O(\frac{1}{N}))\int_{\mathbb{R}}f(y)\frac{N^{5/6}}{\sqrt{2}}\rho_{\beta
FTE_{N}}\left(\sqrt{2N}(1+\frac{y}{2N^{2/3}})\right)dy\nonumber
\end{align}
Next, we will prove that the limit of the difference between $I_{2}$
 and (\ref{result:edge}) is zero as $N$ goes to infinity. Note that
$f(x)\in C_{c}(\mathbb{R})$. For any $\epsilon>0$, there exists some
$\delta(\epsilon)>0$ such that if $|x-y|<\delta$, then
\begin{equation}
|f(x)-f(y)|<\epsilon. \nonumber
\end{equation}
Since $$\sqrt{\beta}N(\frac{1}{\sqrt{2}}-\alpha_{N})\leq r \leq
\sqrt{\beta}N(\frac{1}{\sqrt{2}}+\alpha_{N}),$$ one has $a_{N\beta}r
=1+O(\alpha_{N})$.

 Hence, for any $y\in supp(f)$, there exists
$N_{1}$ not depending on $y$ such that
$$|2N^{2/3}(a_{N\beta}r(1+\frac{y}{2N^{2/3}})-1)-y|=|O(N^{\frac{2}{3}-\theta})-y\,O(N^{-\theta})|<\delta$$
for $N>N_{1}$. Thus,
$$|f\big(2N^{2/3}(a_{N\beta}r(1+\frac{y}{2N^{2/3}})-1)\big)-f(y)|<\epsilon.$$
Pick an interval $[-K,K]$ such that
\begin{equation}
supp(f)\subset[-K,K],\,\{2N^{2/3}(a_{N\beta}r(1+\frac{y}{2N^{2/3}})-1)|\,y\in
supp(f)\}\subset[-K,K].\nonumber
\end{equation}
It follows from Lemma \ref{mainlemma} below that the difference
between
 (\ref{I2: bian xian hou de}) and
(\ref{result:edge}) can be dominated by
\begin{align}
\label{difference} &|I_{2}-(\ref{result:edge})|\leq
\frac{\epsilon}{C_{N\beta}}\frac{N^{5/6}}
{\sqrt{2}}\int_{-K}^{K}\int_{\sqrt{\beta}N(\frac{1}{\sqrt{2}}-\alpha_{N})}^{\sqrt{\beta}N(\frac{1}{\sqrt{2}}+\alpha_{N})}
\,e^{-r^{2}/2}r^{N_{\beta}-1}\rho_{\beta
FTE_{N}}\Big(\sqrt{2N}(1+\frac{y}{2N^{2/3}})\Big)drdy\nonumber\\
&=\epsilon\, \frac{N^{5/6}}{\sqrt{2}}\int_{-K}^{K}\rho_{\beta
FTE_{N}}\left(\sqrt{2N}(1+\frac{y}{2N^{2/3}})\right)dy\,\frac{1}{C_{N\beta}}
\int_{\sqrt{\beta}N(\frac{1}{\sqrt{2}}-\alpha_{N})}^{\sqrt{\beta}N(\frac{1}{\sqrt{2}}+\alpha_{N})}
e^{-r^{2}/2}r^{N_{\beta}-1}dr\nonumber\\  &\leq \epsilon
(1+O(\frac{1}{N}))\frac{N^{5/6}}{\sqrt{2}}\int_{-K}^{K}\rho_{\beta
FTE_{N}}\left(\sqrt{2N}(1+\frac{y}{2N^{2/3}})\right)dy\\
&\leq 2\epsilon M_{K}\nonumber
\end{align}
where the constant $M_{K}$ only depends on $K$. It is worth
mentioning that  we have applied the followng Lemma \ref{mainlemma}
to (\ref{difference}). With the requirement of the continuity of our
argument, Lemma \ref{mainlemma} will be proved after completing this
proof. Now we have proved that
$$\lim_{N\rightarrow\infty}|I_{2}-(\ref{result:edge})|=0.$$
Combining (\ref{divide integral}), (\ref{I1: result}),
(\ref{I3:result}), (\ref{I2: bian xian hou de}) and
(\ref{result:edge}),
\begin{multline*}
\int_{\mathbb{R}}f(x)\frac{\sqrt{\beta}N^{5/6}}{\sqrt{2}}\rho_{\beta
HE_{N}}\left(\sqrt{2N\beta}(1+\frac{x}{2N^{2/3}})\right)\,dx\\
=(1+O(\frac{1}{N}))\int_{\mathbb{R}}f(y)\frac{N^{5/6}}{\sqrt{2}}\rho_{\beta
FTE_{N}}\left(\sqrt{2N}(1+\frac{y}{2N^{2/3}})\right)dy +o(1).
\end{multline*}

It immediately follows from the assumption  of Theorem $\ref{theorem
edge}$ that
\begin{multline*}
\lim_{N\rightarrow\infty}\int_{\mathbb{R}}f(x)\frac{N^{5/6}}{\sqrt{2}}\rho_{\beta
FTE_{N}}\left(\sqrt{2N}(1+\frac{x}{2N^{2/3}})\right)dx
\\=\lim_{N\rightarrow\infty}\int_{\mathbb{R}}f(x)\frac{\sqrt{\beta}N^{5/6}}{\sqrt{2}}\rho_{\beta
HE_{N}}\left(\sqrt{2N\beta}(1+\frac{x}{2N^{2/3}})\right)\,dx.
\end{multline*}
This completes the proof.
\end{proof}
The following lemma will be proved by using a similar argument from
Lemma 4 in \cite{GG1}.
\begin{lemma}
\label{mainlemma} If $\forall\, h(x)\in C_{c}(\mathbb{R})$,
\begin{align}
\label{lemma:at the edge, condition}
\lim_{N\rightarrow\infty}\int_{\mathbb{R}}h(x)\frac{\sqrt{\beta}N^{5/6}}{\sqrt{2}}\rho_{\beta
HE_{N}}\left(\sqrt{2N\beta}(1+\frac{x}{2N^{2/3}})\right)\,dx
\end{align}
exists, then for any fixed $R$,
\begin{equation}
\label{upper bound:at the edge}
\frac{N^{5/6}}{\sqrt{2}}\int_{-R}^{R}\rho_{\beta
FTE_{N}}\left(\sqrt{2N}(1+\frac{y}{2N^{2/3}})\right)dy\leq M_{R}.
\end{equation}
where $M_{R}$ is a constant only depending on $R$.
\end{lemma}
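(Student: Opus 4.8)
The plan is to exploit the elementary fact that a convergent sequence is bounded, combined with the same integral-equation decomposition already used in the proof of Theorem \ref{theorem edge}, run with a carefully chosen \emph{nonnegative} test function. First I would fix $R$ and select $h\in C_{c}(\mathbb{R})$ with $h\geq 0$ everywhere and $h\equiv 1$ on the enlarged interval $[-R-1,R+1]$. By the hypothesis (\ref{lemma:at the edge, condition}), the sequence $\int_{\mathbb{R}}h(x)\frac{\sqrt{\beta}N^{5/6}}{\sqrt{2}}\rho_{\beta HE_{N}}(\sqrt{2N\beta}(1+x/(2N^{2/3})))\,dx$ converges as $N\to\infty$, and is therefore bounded by a constant $B$ independent of $N$ (each term being finite since $h$ has compact support and the densities are integrable).

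Next I would insert this $h$ in place of $f$ into the integral equation (\ref{integral equation}) and split the inner $r$-integral exactly as in (\ref{divide integral}), with cut points $\sqrt{\beta}N(1/\sqrt{2}\pm\alpha_{N})$ and $\alpha_{N}=N^{-\theta}$, $2/3<\theta<1$, yielding $I_{1}+I_{2}+I_{3}$. Since $h\geq 0$ and both the kernel $e^{-r^{2}/2}r^{N_{\beta}-1}$ and the density $\rho_{\beta FTE_{N}}$ are nonnegative, each of $I_{1},I_{2},I_{3}$ is nonnegative, so $B\geq I_{1}+I_{2}+I_{3}\geq I_{2}$. This reduces the claim to bounding $I_{2}$ from below by the quantity appearing in (\ref{upper bound:at the edge}). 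Note that, unlike the proof of Theorem \ref{theorem edge}, here I do not need $I_{1},I_{3}\to 0$; I only use their nonnegativity.

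To obtain the lower bound on $I_{2}$, I would apply the change of variables (\ref{variable substitution}) to reach the form (\ref{I2: bian xian hou de}), and then restrict the $y$-integral from $\mathbb{R}$ to $[-R,R]$ (legitimate because the integrand is nonnegative). On this range, with $r$ confined to the middle interval, one has $a_{N\beta}r=1+O(\alpha_{N})$, so the shifted argument satisfies $2N^{2/3}(a_{N\beta}r(1+y/(2N^{2/3}))-1)=y+O(N^{2/3-\theta})$; since $\theta>2/3$ this perturbation tends to $0$ uniformly for $y\in[-R,R]$, hence for $N$ large it lies in $[-R-1,R+1]$, where $h\equiv 1$. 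Replacing $h$ by the lower bound $1$ and factoring off the $r$-integral gives $I_{2}\geq\frac{1}{C_{N\beta}}\int_{\sqrt{\beta}N(1/\sqrt{2}-\alpha_{N})}^{\sqrt{\beta}N(1/\sqrt{2}+\alpha_{N})}e^{-r^{2}/2}r^{N_{\beta}-1}\,dr\cdot\frac{N^{5/6}}{\sqrt{2}}\int_{-R}^{R}\rho_{\beta FTE_{N}}(\sqrt{2N}(1+y/(2N^{2/3})))\,dy$, and by the computation in (\ref{result:edge}) the $r$-integral equals $1+O(1/N)$.

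Combining the two bounds, for $N$ large enough that $1+O(1/N)\geq 1/2$ I obtain $\frac{N^{5/6}}{\sqrt{2}}\int_{-R}^{R}\rho_{\beta FTE_{N}}(\sqrt{2N}(1+y/(2N^{2/3})))\,dy\leq 2I_{2}\leq 2B$; enlarging the constant to absorb the finitely many remaining values of $N$ produces the uniform bound $M_{R}$. The main obstacle is the geometric bookkeeping in the change of variables, namely verifying that the image of $[-R,R]$ stays inside the plateau $\{h\equiv 1\}$ uniformly over the entire middle $r$-range, which is exactly what the restriction $2/3<\theta<1$ together with the negligibility estimates (\ref{1}) and (\ref{2}) are designed to guarantee.
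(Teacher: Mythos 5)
Your proposal is correct and is essentially the paper's own proof: both arguments hinge on a compactly supported test function equal to $1$ on an enlarged window around $[-R,R]$, the same change of variables and middle-range decomposition of the integral equation, the fact that $\frac{1}{C_{N\beta}}\int e^{-r^{2}/2}r^{N_{\beta}-1}\,dr$ over $[\sqrt{\beta}N(1/\sqrt{2}-\alpha_{N}),\sqrt{\beta}N(1/\sqrt{2}+\alpha_{N})]$ equals $1+O(1/N)$, and the elementary observation that the convergent $\beta$-HE sequence is bounded. The only difference is cosmetic: you discard the tail contributions $I_{1},I_{3}$ by nonnegativity of the integrand, whereas the paper identifies the middle term with the full $\beta$-HE integral up to the exponentially small errors (\ref{I1: result}) and (\ref{I3:result}) -- a slight streamlining on your part, not a different argument.
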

\begin{proof}
If $\sqrt{\beta}N(\frac{1}{\sqrt{2}}-\alpha_{N})\leq u\leq
\sqrt{\beta}N(\frac{1}{\sqrt{2}}+\alpha_{N})$, then there exists
$N_{0}$ such that for $N>N_{0}$,
\begin{equation}
|2N^{2/3}(a_{N\beta}u(1+\frac{y}{2N^{2/3}})-1)|\leq R+1
\end{equation}
where $a_{N\beta}$ is given by (\ref{anbeta}). Let $\eta\in(0,1)$ be
a real number and let $\phi(t)$ be a smooth decreasing function on
$[0,R+1)$ such that $\phi(t) = 1$ for $t \in[0,R+1)$ and $\phi(t)=0$
for $t\geq(1+\eta)(R+1)$. Therefore, we have
\begin{align}
&\frac{N^{5/6}}{\sqrt{2}}\int_{-R}^{R}\rho_{\beta
FTE_{N}}\left(\sqrt{2N}(1+\frac{y}{2N^{2/3}})\right)dy\nonumber\\&\leq
\frac{N^{5/6}}{\sqrt{2}}\int_{\mathbb{R}}\phi(2N^{2/3}(a_{N\beta}u(1+\frac{y}{2N^{2/3}})-1))\rho_{\beta
FTE_{N}}\left(\sqrt{2N}(1+\frac{y}{2N^{2/3}})\right)dy.
\end{align}
Multiplying both sides by $$\frac{1}{C_{N\beta}}
e^{-u^{2}/2}u^{N_{\beta}-1}$$ then integrating  about $u$ on
$$[\sqrt{\beta}N(\frac{1}{\sqrt{2}}-\alpha_{N}),\sqrt{\beta}N(\frac{1}{\sqrt{2}}+\alpha_{N})],$$
one obtains
\begin{align}
\frac{1}{C_{N\beta}}\int_{\sqrt{\beta}N(\frac{1}{\sqrt{2}}-\alpha_{N})}^{\sqrt{\beta}N(\frac{1}{\sqrt{2}}+\alpha_{N})}
e^{-u^{2}/2}u^{N_{\beta}-1}du\,\frac{N^{5/6}}{\sqrt{2}}\int_{-R}^{R}\rho_{\beta
FTE_{N}}\left(\sqrt{2N}(1+\frac{y}{2N^{2/3}})\right)\nonumber
\end{align}
\begin{multline}
\leq\frac{1}{C_{N\beta}}\frac{N^{5/6}}
{\sqrt{2}}\int_{\mathbb{R}}\int_{\sqrt{\beta}N(\frac{1}{\sqrt{2}}-\alpha_{N})}^{\sqrt{\beta}N(\frac{1}{\sqrt{2}}+\alpha_{N})}\nonumber
\phi(2N^{2/3}(a_{N\beta}u(1+\frac{y}{2N^{2/3}})-1))
\\ \times e^{-u^{2}/2}u^{N_{\beta}-1}\rho_{\beta
FTE_{N}}\Big(\sqrt{2N}(1+\frac{y}{2N^{2/3}})\Big)du\,dy.
\end{multline}
On the one hand, combining (\ref{media integral}), it is easy to
observe that the left hand side of the above inequality equals
\begin{equation}
(1+O(\frac{1}{N}))\frac{N^{5/6}}{\sqrt{2}}\int_{-R}^{R}\rho_{\beta
FTE_{N}}\left(\sqrt{2N}(1+\frac{y}{2N^{2/3}})\right) d\,y.
\end{equation}
On the other hand, it follows from (\ref{divide integral}),
(\ref{I1: result}), (\ref{I3:result}) and (\ref{I2: bian xian hou
de}) that the right hand side of the above inequality equals
\begin{equation}
\int_{\mathbb{R}}\phi(y)\frac{\sqrt{\beta}N^{5/6}}{\sqrt{2}}\rho_{\beta
HE_{N}}\left(\sqrt{2N\beta}(1+\frac{y}{2N^{2/3}})\right)\,dy+O(N\,e^{-\beta
N^{2(1-\theta)}(1+o(1))}).
\end{equation}
The existence of the limit of (\ref{lemma:at the edge, condition})
proves this lemma.
\end{proof}

%

\section*{Acknowledgments}
The work of Da-Sheng Zhou and Tao Qian are supported by research
grant of the University of Macau No. FDCT014/2008/A1. Dang-Zheng Liu
thanks Zheng-Dong Wang for his encouragement and support.

\section*{Appendix: equivalence of moments} This appendix presents
their moment equivalence between fixed trace $\beta$-Hermite
ensembles and $\beta$-Hermite ensembles in the large $N$.

Recall that Dumitriu and Edelman's $\beta$-Hermite tri-diagonal
matrix models
 $$
H_{\beta} = \begin{bmatrix}
 a_{_{N}} & b_{_{N-1}}  &       &      \\
 b_{_{N-1}} & a_{_{N-1}}  &\ddots &      \\
     &\ddots&\ddots & {\!b_{1}\!}     \\
     &      &{\!b_{1}\!}&  a_1
\end{bmatrix}
$$
where $a_{j}\sim N(0,1),\ j=1,\cdots, N$ and $\sqrt{2}b_{j}\sim
\chi_{j\beta},\ j=1,\cdots, N-1$. Then proceeding from the analogy
of a fixed energy in classical statistical mechanics, one can define
a `fixed trace' ensemble by the requirement that the trace of
$H_{\beta}^{2}$ be fixed to a number $r^{2}$ with no other
constraint. The number $r$ is called the strength of the ensemble.
The joint probability density function for the matrix elements of
$H_{\beta}$ is therefore given by

\[P_{r}(H_{\beta})=K_{r}^{-1}\,
\delta\big(\frac{1}{r^{2}}\text{tr}\,H_{\beta}^{2}-1\big)
\prod_{j=1}^{N-1}b_{j}^{j\beta-1}\] with
\[K_{r}=\int \cdots \int \delta\big(\frac{1}{r^{2}}\text{tr}\,H_{\beta}^{2}-1\big)
\prod_{j=1}^{N-1}b_{j}^{j\beta-1}d\,a\, d\,b\] where
$d\,a=\prod_{j=1}^{N}d\,a_{j}$ and $d\,b=\prod_{j=1}^{N-1}d\,b_{j}.$

Note that for the $\beta$-Hermite ensemble the joint probability
function
\[P(H_{\beta})=K^{-1}\,
\text{exp}\big(-\frac{1}{2}\text{tr}\,H_{\beta}^{2}\big)
\prod_{j=1}^{N-1}b_{j}^{j\beta-1}\] with
\[K=\int \cdots \int \text{exp}\big(-\frac{1}{2}\text{tr}\,H_{\beta}^{2}\big)
\prod_{j=1}^{N-1}b_{j}^{j\beta-1} d\,a\, d\,b.\] When we choose the
number $r^{2}$ as the average of $H_{\beta}^{2}$, i.e.,
\[\langle\text{tr}\,H_{\beta}^{2}\rangle=
K^{-1}\int \cdots \int
\text{tr}\,H_{\beta}^{2}\,\text{exp}\big(-\frac{1}{2}\text{tr}\,H_{\beta}^{2}\big)
\prod_{j=1}^{N-1}b_{j}^{j\beta-1} d\,a\, d\,b=r^{2},\] then for any
fixed value of the sum
\[s=\sum_{j=1}^{N}\eta_{j}^{(a)}+\sum_{j=1}^{N-1}\eta_{j}^{(b)},\ \ \ \eta_{j}^{(a)},\ \eta_{j}^{(b)} \geq 0,\]
the ratio the moments
$$M_{r}(N,\eta)=\Big\langle\prod_{j=1}^{N}(a_{j})^{\eta_{j}^{(a)}}\prod_{j=1}^{N-1}(b_{j})^{\eta_{j}^{(b)}}\Big\rangle_{r}$$
and
$$M(N,\eta)=\Big\langle\prod_{j=1}^{N}(a_{j})^{\eta_{j}^{(a)}}\prod_{j=1}^{N-1}(b_{j})^{\eta_{j}^{(b)}}\Big\rangle$$
tends to unity as the number of dimensions $N$ tends to infinity.
The subscript $r$ and non-subscript denote that the average is taken
in the fixed trace and $\beta$-Hermite ensembles, respectively.

First, let's calculate
$r^{2}=\langle\text{tr}\,H_{\beta}^{2}\rangle$ with a basic
manipulation in statistical mechanics. Write
\[g_{\beta}(\lambda)=c_{_{H}}^{\beta}|\Delta(\lambda)|^{\beta}\text{exp}\big(-t\sum_{j=1}^{N}\lambda_{j}^{2}\big)\]
where
\[c_{_{H}}^{\beta}=(2t)^{N/2+\beta N(N-1)/4}\,(2\pi)^{-N/2}\prod_{j=1}^{N}\frac{\Gamma (1+\beta/2)}{\Gamma(1+j\beta/2)}.\]
Note that $g_{\beta}(\lambda)$ with $t=1/2$ corresponds to the joint
probability distribution of eigenvalues for $\beta$-Hermite
ensembles \cite{DE1}. A partial differentiation with respect to $t$
and setting $t=1/2$ gives
\[\langle\text{tr}\,H_{\beta}^{2}\rangle=2(N/2+\beta N(N-1)/4).\]

Next, to calculate $M_{r}(N,\eta)$, substitute
$(2\xi)^{-1/2}\,r\,a_{j}$ for $a_{j}$ and $(2\xi)^{-1/2}\,r\,b_{j}$
for $b_{j}$ where $\xi$ is a parameter. This gives
\begin{align*}
M_{r}&(N,\eta)\big(\frac{2\xi}{r^{2}}\big)^{N/2+\beta N(N-1)/4+s/2}\\
 &=K_{r}^{-1}\int \cdots \int
\delta\big(\frac{1}{2\xi}\text{tr}\,H_{\beta}^{2}-1\big)
\prod_{j=1}^{N}(a_{j})^{\eta_{j}^{(a)}}\prod_{j=1}^{N-1}(b_{j})^{\eta_{j}^{(b)}}
\prod_{j=1}^{N-1}b_{j}^{j\beta-1}d\,a\, d\,b.
\end{align*}
Multiplying both sides by $e^{-\xi}$ and integrating on $\xi$ from 0
to $\infty $, we get
\begin{align*}
M_{r}&(N,\eta)\Gamma(L+s/2+1)\,L^{-L-s/2}\\
 &=K_{r}^{-1}\int \cdots \int
\text{exp}\big(-\frac{1}{2}\text{tr}\,H_{\beta}^{2}\big)
\prod_{j=1}^{N}(a_{j})^{\eta_{j}^{(a)}}\prod_{j=1}^{N-1}(b_{j})^{\eta_{j}^{(b)}}
\prod_{j=1}^{N-1}b_{j}^{j\beta-1}d\,a\, d\,b
\end{align*}
where we have put\[L=\frac{1}{2} r^{2}=N/2+\beta N(N-1)/4.\] or
\[M_{r}(N,\eta)=\frac{L^{L+s/2}}{\Gamma(L+s/2+1)}\frac{K}{K_{r}}
M(N,\eta).\] Setting $\eta_{j}^{(a)}=\eta_{j}^{(b)}=0$ in the above
and using the normalization condition $M_{r}(N,0)=M(N,0)=1$, we get
the ratio of the constants $K$ and $K_{r}$. Substituting this ratio
we then obtain
\[M_{r}(N,\eta)=\frac{L^{s/2}\Gamma(L+1)}{\Gamma(L+s/2+1)}
M(N,\eta).\] As $N\rightarrow \infty, L\rightarrow \infty$, and we
can use Stirling's formula for the gamma function for the large $x$
\[\Gamma(x+1)=x^{-x}\,e^{-x}\sqrt{2\pi x}\,[1+O(1/x)],
\]
to prove the asymptotic equality of all the finite moments $s\ll N$.

In sum, the result of moment equivalence can be stated as follows:
\begin{theorem}With the above notation $M_{r}(N,\eta)$and $M(N,\eta)$, we have
\[\lim_{N\rightarrow \infty}\frac{M_{r}(N,\eta)}{M(N,\eta)}=1.\]
\end{theorem}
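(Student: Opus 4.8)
The plan is to start from the exact identity
\[\frac{M_{r}(N,\eta)}{M(N,\eta)}=\frac{L^{s/2}\,\Gamma(L+1)}{\Gamma(L+s/2+1)},\qquad L=\frac{N}{2}+\frac{\beta N(N-1)}{4},\]
which has already been established above in the Appendix, and to show that its right hand side converges to $1$ as $N\to\infty$. Since $L\to\infty$ whenever $N\to\infty$, the entire problem reduces to an asymptotic analysis of the quotient of two gamma functions whose arguments differ by the fixed (or slowly growing) amount $s/2$.

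First I would apply Stirling's formula $\Gamma(x+1)=x^{x}e^{-x}\sqrt{2\pi x}\,[1+O(1/x)]$ to both $\Gamma(L+1)$ and $\Gamma(L+s/2+1)$, obtaining
\[\frac{\Gamma(L+1)}{\Gamma(L+s/2+1)}=\frac{L^{L}e^{-L}\sqrt{2\pi L}}{(L+s/2)^{L+s/2}e^{-(L+s/2)}\sqrt{2\pi (L+s/2)}}\,[1+O(1/L)].\]
The decisive manipulation is to extract the dominant power of $L$ from the denominator by writing $(L+s/2)^{L+s/2}=L^{L+s/2}\,(1+\tfrac{s}{2L})^{L+s/2}$. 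After cancelling $L^{L}$ against $L^{L+s/2}$ and $e^{-L}$ against $e^{-(L+s/2)}$, and noting $\sqrt{L/(L+s/2)}=1+o(1)$, one is left with
\[\frac{\Gamma(L+1)}{\Gamma(L+s/2+1)}=L^{-s/2}\,e^{s/2}\,\big(1+\tfrac{s}{2L}\big)^{-(L+s/2)}\,[1+o(1)].\]
Multiplying through by the prefactor $L^{s/2}$ then cancels the power of $L$ exactly, so it remains only to verify that $e^{s/2}(1+\tfrac{s}{2L})^{-(L+s/2)}\to 1$.

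The main obstacle, and the only genuinely quantitative point, is controlling this last factor uniformly in the regime where $s$ is permitted to grow with $N$; this is precisely where the hypothesis $s\ll N$ enters. Expanding the logarithm gives
\[\big(L+\tfrac{s}{2}\big)\log\!\big(1+\tfrac{s}{2L}\big)=\frac{s}{2}+\frac{s^{2}}{8L}+O\!\Big(\frac{s^{3}}{L^{2}}\Big),\]
so that $e^{s/2}(1+\tfrac{s}{2L})^{-(L+s/2)}=\exp\!\big(-\tfrac{s^{2}}{8L}+O(s^{3}/L^{2})\big)$. Since $L\sim \beta N^{2}/4$, the leading correction is of order $s^{2}/N^{2}$, which tends to $0$ exactly when $s=o(N)$, i.e.\ under the stated condition $s\ll N$; the higher-order terms are even smaller in the same regime. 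Collecting the Stirling error $O(1/L)$, the factor $\sqrt{L/(L+s/2)}=1+O(s/L)$, and this exponential factor --- each of which is $1+o(1)$ under $s\ll N$ --- yields $M_{r}(N,\eta)/M(N,\eta)\to 1$, completing the proof.
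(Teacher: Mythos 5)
Your proof is correct and takes essentially the same route as the paper: both start from the exact identity $M_{r}(N,\eta)/M(N,\eta)=L^{s/2}\,\Gamma(L+1)/\Gamma(L+s/2+1)$ derived in the appendix and then apply Stirling's formula as $L\to\infty$. The only difference is one of detail: the paper disposes of the Stirling step in a single sentence, while you carry it out explicitly, identifying the ratio as $\exp\bigl(-s^{2}/(8L)+O(s^{3}/L^{2})\bigr)\,(1+o(1))$ and thereby making precise exactly where the hypothesis $s\ll N$ enters (you also use the correct form $\Gamma(x+1)=x^{x}e^{-x}\sqrt{2\pi x}\,[1+O(1/x)]$, fixing the paper's typographical $x^{-x}$).
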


\end{document}